\documentclass[a4paper,leqno]{amsart}

\usepackage{latexsym}
\usepackage[english]{babel}
\usepackage{fancyhdr}
\usepackage[mathscr]{eucal}
\usepackage{amsmath}
\usepackage{mathrsfs}
\usepackage{amsthm}
\usepackage{amsfonts}
\usepackage{amssymb}
\usepackage{amscd}
\usepackage{bbm}
\usepackage{graphicx}
\usepackage{subcaption}
\usepackage{graphics}
\usepackage{latexsym}
\usepackage{color}

\usepackage{pifont}
\usepackage{booktabs} % for '\midrule' macro

\newcommand{\ud}{\mathrm{d}}

\newcommand{\ii}{\mathrm{i}}
\newcommand{\cH}{\mathcal{H}}
\newcommand{\ran}{\mathrm{ran}}

\theoremstyle{plain}
\newtheorem{theorem}{Theorem}[section]
\newtheorem{lemma}[theorem]{Lemma}
\newtheorem{corollary}[theorem]{Corollary}
\newtheorem{proposition}[theorem]{Proposition}

\theoremstyle{definition}

\newtheorem{remark}[theorem]{Remark}
\newtheorem{example}[theorem]{Example}

\numberwithin{equation}{section}

\begin{document}

\title[On Krylov solutions to infinite-dimensional inverse linear problems]
{On Krylov solutions to infinite-dimensional inverse linear problems}
\author[N.~Caruso]{Noe Caruso}
\address[N.~Caruso]{International School for Advanced Studies -- SISSA \\ via Bonomea 265 \\ 34136 Trieste (Italy).}
\email{ncaruso@sissa.it}
\author[A.~Michelangeli]{Alessandro Michelangeli}
\address[A.~Michelangeli]{International School for Advanced Studies -- SISSA \\ via Bonomea 265 \\ 34136 Trieste (Italy).}
\email{alemiche@sissa.it}
\author[P.~Novati]{Paolo Novati}
\address[P.~Novati]{Universit\`{a} degli Studi di Trieste \\ Piazzale Europa 1 \\ 34127 Trieste (Italy).}
\email{novati@units.it}

%\dedicatory{}

\begin{abstract}
% We present a general discussion of the main features and issues that inverse linear problems in Hilbert space exhibit when the dimension of the space is \emph{infinite}. This includes the set-up of a consistent notation for inverse problems that are genuinely infinite-dimensional, the analysis of the finite-dimensional truncations, a discussion of the mechanisms why the error or the residual generically fail to vanish in norm, and the identification of practically plausible sufficient conditions for such indicators to be small in some weaker sense. In particular, 
% 
We discuss, in the context of inverse linear problems in Hilbert space, the notion of the associated infinite-dimensional Krylov subspace and we produce necessary and sufficient conditions for the Krylov-solvability of a given inverse problem, together with a series of model examples and numerical experiments.
%The presentation is based on theoretical results together with a series of model examples, and it is corroborated by specific numerical experiments.
\end{abstract}

\date{\today}

\subjclass[2000]{}
\keywords{inverse linear problems, infinite-dimensional Hilbert space, ill-posed problems, orthonormal basis discretisation, bounded linear operators, self-adjoint operators, Krylov subspaces, cyclic operators, Krylov solution, GMRES, CG, LSQR}

% 
% \thanks{Partially supported by the 2014-2017 MIUR-FIR 
% grant ``\emph{Cond-Math: Condensed Matter and Mathematical Physics}'' code RBFR13WAET}

\maketitle

%\tableofcontents

\section{Introduction and outlook}\label{intro}

Krylov subspace methods constitute a wide class of efficient numerical schemes for finite-dimensional inverse linear problems, even counted among the `Top 10 Algorithms' of the 20th century \cite{Dongarra-Sullivan-Best10-2000,Cipra-SIAM-News}.

Whereas this framework is by now classical and deeply understood for finite-dimensional inverse problems (see, e.g., the monographs \cite{Saad-2003_IterativeMethods,Liesen-Strakos-2003} or also \cite{Saad-1981}), it is instead less explored -- and surely lacks a systematic study -- in the infinite-dimensional case \cite{Karush-1952,Daniel-1967,Kammerer-Nashed-1972,Nemirovskiy-Polyak-1985,Winther-1980,Herzog-Ekkehard-2015}.

In this work we focus on the general setting of infinite-dimensional inverse linear problems that are solved by means of finite-dimensional truncations taken with respect to a basis of the associated Krylov subspace, and we investigate the possibility that the solution can be indeed well approximated by vectors in the Krylov subspace.

To fix the nomenclature and the notation, let us consider an \emph{inverse linear problem} in Hilbert space, namely the problem, given a Hilbert space $\cH$, a linear operator $A$ acting on $\cH$, and a vector $g\in\cH$, to determine the solution(s) $f\in\cH$ to the linear equation
\begin{equation}\label{eq:gen}
 Af\;=\;g\,.
\end{equation}
We shall say that: \eqref{eq:gen} is \emph{solvable} if a solution $f$ exists, namely if $g\in\mathrm{ran}A$; \eqref{eq:gen} is \emph{well-defined} if additionally the solution $f$ is unique, i.e., if $A$ is also injective (in which case one refers to  $f$ `\emph{exact}' solution); \eqref{eq:gen} is \emph{well-posed} if there exists a unique solution that depends continuously (i.e., in the norm of $\cH$) on the datum $g$, equivalently, that $g\in\mathrm{ran} A$ and $A$ has bounded inverse on its range.

Although well-defined inverse linear problems are in a sense trivial theoretically, as the existence and uniqueness of the solution is not of concern, a crucial numerical issue is the control of the \emph{truncation} to the finite-dimensional space in which approximate solutions are to be computed. Obviously this refers to the case when $\dim\cH=\infty$ and $A$ is a genuine infinite-dimensional operator on $\cH$: by this we mean, as customary \cite[Sect.~1.4]{schmu_unbdd_sa}, that $A$ is \emph{not} reduced as $A=A_1\oplus A_2$ by an orthogonal direct sum decomposition $\cH=\cH_1\oplus\cH_2$ with $\dim\cH_1<\infty$, $\dim\cH_2=\infty$, and $A_2=\mathbb{O}$.

In the framework of standard Galerkin and Petrov-Galerkin methods \cite{Ern-Guermond_book_FiniteElements,Quarteroni-book_NumModelsDiffProb}, typically developed for partial differential %(hence unbounded) 
operators, the well-posedness of the problem \eqref{eq:gen} is ensured by various classical conditions (in practice some kind of coercivity of $A$), such as the Banach-Ne\u{c}as-Babu\v{s}ka Theorem or the Lax-Milgram Lemma \cite[Chapter 2]{Ern-Guermond_book_FiniteElements}. Analogous conditions guarantee the well-posedness of the truncated problems, and in order for the finite-dimensional solutions to converge strongly in the infinite-dimensional limit, one requires stringent yet often plausible conditions \cite[Sect.~2.2-2.4]{Ern-Guermond_book_FiniteElements}, \cite[Sect.~4.2]{Quarteroni-book_NumModelsDiffProb} both on the truncation spaces, that need approximate suitably well the ambient space $\cH$ (`\emph{approximability}', thus the interpolation capability of finite elements),
% Ern introduction page v
and on the behaviour of the reduced problems, that need admit solutions that are uniformly controlled by the data 
% page 82
(`\emph{uniform stability}'), and that are suitably good approximate solutions of the original problem (`\emph{asymptotic consistency}'), together with some suitable boundedness of the problem in appropriate topologies (`\emph{uniform continuity}').

For non-differential inverse problems, for example when $A$ is a compact or a generic bounded operator with a bad-behaving inverse (e.g., when $A$ is non-coercive), as is often the case when $A$ is an integral operator, the solvability or singularity of the truncated problems and the error analysis in the infinite-dimensional limit are being studied as well \cite{Gasparo-Papini-Pasquali-2008,CMN-truncation-2018}.

In this respect, Krylov subspace methods are a class of algorithms where approximate solutions to \eqref{eq:gen} are sought among the linear combinations of the vectors $g,Ag,A^2g,\dots$ which span the so-called `\emph{Krylov subspace}' $\mathcal{K}(A,g)$ associated with $A$ and $g$.

The infinite-dimensionality of the underlying Hilbert space $\cH$ comes with a load of new issues, starting from the very definition of the Krylov vectors $A^k g$ if $A$ is unbounded \cite{CM-Nemi-unbdd-2019}. Even when $A$ is everywhere defined and bounded, and hence $\mathcal{K}(A,g)$ is well-defined, it may well happen that $\mathcal{K}(A,g)$ is not dense in $\cH$, thus preventing the truncation spaces to have that approximability feature which, as mentioned above, is a typical assumption for (Petrov-)Galerkin schemes.

Among such potential difficulties, the first crucial question is whether the solution(s) to \eqref{eq:gen} can be well approximated by vectors in $\mathcal{K}(A,g)$, say, whether they belong to the closure $\overline{\mathcal{K}(A,g)}$ taken in the $\cH$-norm topology. In the affirmative case, the Krylov subspace is a reliable space for the approximants of the exact solution(s): we shall refer to such an occurrence by saying that the problem \eqref{eq:gen} is `\emph{Krylov-solvable}' and a solution $f$ to \eqref{eq:gen} such that $f\in\overline{\mathcal{K}(A,g)}$ will be referred to as a `\emph{Krylov solution}'. 

Additional relevant questions then arise, for example in the presence of a multiplicity of solutions some may be Krylov and others may not.

In the present work we investigate an amount of mathematical aspects of a (genuinely) infinite-dimensional bounded inverse linear problem in Hilbert space with respect to the underlying Krylov subspace.

After fixing the natural generalisation of the Krylov subspace in infinite dimensions (Sect.~\ref{sec:Krylov}), we address the general question of the Krylov solvability. Through several paradigmatic examples and counter-examples we show the typical occurrences where such a feature may hold or fail.

Most importantly, we demonstrate necessary and sufficient conditions, for certain relevant classes of bounded operators, in order for the solution to be a Krylov solution (Sect.~\ref{sec:Krylov_soll}).

To this aim, we identify a somewhat `intrinsic' notion associated to the operator $A$ and the datum $g$, a subspace that we call the `\emph{Krylov intersection}', that turns out to qualify the operator-theoretic mechanism for the Krylov-solvability of the problem.

We observe that for the study case that is most investigated in the previous literature of infinite-dimensional Krylov subspaces, namely the self-adjoint bounded inverse linear problems, this mechanism takes a more explicit form, that we shall refer to as the `\emph{Krylov reducibility}' of the operator $A$.

Last, in the concluding part, Section \ref{sec:numerics}, we investigate the main features discussed theoretically through a series of numerical tests on inverse problems in infinite-dimensional Hilbert space, suitably truncated and analysed by increasing the size of the truncation.

% 
% It is worth stressing that throughout our analysis we keep the point of view of a discussion that presents \emph{generic} features and issues that look `unavoidable' at the considered level of generality. In our intentions this should provide the setting for a future thorough analysis of special classes of infinite-dimensional inverse problems, to be discussed having in mind the general issues presented here, such as the control of the truncation, the notion of convergence, the Krylov-solvability, etc. For this reason, besides stating and proving our main results, the material will be presented also through several model examples (and counter-examples).

\bigskip

\textbf{General notation.} Besides further notation that will be declared in due time, we shall keep the following convention. $\cH$ denotes a complex Hilbert space, that is assumed to be separable throughout this note, with norm $\|\cdot\|_{\cH}$ and scalar product $\langle\cdot,\cdot\rangle$, anti-linear in the first entry and linear in the second. Bounded operators on $\cH$ shall be tacitly understood as linear and everywhere defined: they naturally form a space, denoted with $\mathcal{B}(\cH)$, with Banach norm $\|\cdot\|_{\mathrm{op}}$, the customary operator norm.
$\mathbbm{1}$ and $\mathbbm{O}$ denote, respectively, the identity and the zero operator, meant as finite matrices or infinite-dimensional operators depending on the context. An upper bar denotes the complex conjugate $\overline{z}$ when $z\in\mathbb{C}$, and the norm closure $\overline{\mathcal{V}}$ of the span of the vectors in $\mathcal{V}$ when $\mathcal{V}$ is a subset of $\cH$. For $\psi,\varphi\in\cH$, by $|\psi\rangle\langle\psi|$ and $|\psi\rangle\langle\varphi|$ we shall denote the $\cH\to\cH$ rank-one maps acting respectively as $f\mapsto \langle \psi, f\rangle\,\psi$ and $f\mapsto \langle \varphi, f\rangle\,\psi$ on generic $f\in\cH$. For identities such as $\psi(x)=\varphi(x)$ in $L^2$-spaces, the standard `for almost every $x$' declaration will be tacitly understood.

\section{Krylov subspace in infinite dimensional Hilbert space}\label{sec:Krylov}

\subsection{Definition and generalities}~

As well known, given a $d\times d$ (complex) matrix $A$ and a vector $g\in\mathbb{C}^d$, the $N$-th order Krylov subspace associated with $A$ and $g$ is the subspace
\begin{equation}
  \mathcal{K}_N(A,g) \;:=\;  \mathrm{span} \{ g, Ag, \dots , A^{N-1}g\}\;\subset\;\mathbb{C}^d\,.
\end{equation}

Clearly, $1\leqslant\dim\mathcal{K}_N(A,g)\leqslant N$, and there always exists some $N_0\leqslant d$ such that all $N$-th order spaces $\mathcal{K}_N(A,g)$ are the same whenever $N\geqslant N_0$: one then refers to \emph{the} Krylov subspace associated with $A$ and $g$ as the maximal subspace $\mathcal{K}_{N_0}(A,g)$.

This notion has a natural generalisation to a Hilbert space $\cH$ with $\dim\cH=\infty$, an everywhere defined, bounded linear operator $A:\cH\to\cH$, and a vector $g\in \cH$: clearly, unlike the finite-dimensional case, it may happen that  $\sup_N \dim\mathcal{K}_N(A,g)=\infty$.

The Krylov subspace associated with $A$ and $g$ is then defined as
\begin{equation}\label{eq:defKrylov}
  \mathcal{K}(A,g) \;:=\;  \mathrm{span} \{ A^kg\,|\,k\in\mathbb{N}_0\}\,,
\end{equation}
a definition that applies to the finite-dimensional case too. In fact \eqref{eq:defKrylov} makes sense also when $A$ is unbounded, provided that $g$ simultaneously belongs to the domains of all the powers of $A$. Yet, in the present discussion we shall stick to bounded operators acting on (possibly infinite-dimensional) Hilbert spaces.

Obviously, when $\dim\mathcal{K}(A,g)=\infty$ the subspace $\mathcal{K}(A,g)$ is not closed in $\cH$. Its closure can either be a proper closed subspace of $\cH$, or even the whole $\cH$ itself.

\begin{example}~

 \begin{itemize}
  \item[(i)] For  the right-shift operator $R$ on $\ell^2(\mathbb{N})$ (Sec.~\ref{sec:Rshift}) and the vector $g=e_{m+1}$ (one of the canonical basis vectors), $\overline{\mathcal{K}(R,e_{m+1})}=\mathrm{span}\{e_1,\dots,e_m\}^\perp$, which is a proper subspace of $\ell^2(\mathbb{N})$ if $m\geqslant 1$, and instead is the whole $\ell^2(\mathbb{N})$ if $g=e_1$.
  \item[(ii)] For the Volterra integral operator $V$ on $L^2[0,1]$ (Sec.~\ref{sec:Volterra}) and the function $g=\mathbf{1}$ (the constant function with value 1), it follows from \eqref{eq:defVolterra} or \eqref{eq:powersVolterra} that the functions $Vg,V^2g,V^3g,\dots$ are (multiples of) the polynomials $x,x^2,x^3,\dots$, therefore $\mathcal{K}(V,g)$ is the space of polynomials on $[0,1]$, which is dense in $L^2[0,1]$.
 \end{itemize}
\end{example}

In purely operator-theoretical contexts, the Krylov subspace $ \mathcal{K}(A,g)$ is customarily referred to as the \emph{cyclic space} of $A$ relative to the vector $g$, the spanning vectors $g$, $Ag$, $A^2g$, $\dots$ form the \emph{orbit} of $g$ under $A$, the density of $ \mathcal{K}(A,g)$ in $\cH$ is called the \emph{cyclicity} of $g$, in which case $g$ is called a \emph{cyclic vector} for $A$, and when $A$ admits cyclic vectors in $\cH$ one says that $A$ is a \emph{cyclic operator}.

For completeness of information, let us recall a few well-known facts about cyclic vectors and cyclic operators \cite{Halmos-HilbertSpaceBook}.
\begin{itemize}
 \item[(I)] In non-separable Hilbert spaces there are no cyclic vectors.
 \item[(II)] The set of (bounded) cyclic operators on a Hilbert space $\cH$ is dense in $\mathcal{B}(\cH)$, with respect to the $\|\,\|_{\mathrm{op}}$-norm, if $\dim\cH<\infty$; instead, it is not dense in $\mathcal{B}(\cH)$ if $\dim\cH=\infty$.
 \item[(III)] The set of cyclic operators on a separable Hilbert space $\cH$ is not closed in $\mathcal{B}(\cH)$. It is open in $\mathcal{B}(\cH)$ if $\dim\cH<\infty$, it is not open if $\dim\cH=\infty$.
 \item[(IV)] If $\dim\cH=\infty$ and $\cH$ is separable, then the set of non-cyclic operators on $\cH$ is dense in $\mathcal{B}(\cH)$ (whereas, instead, the set of cyclic operators is not). 
 \item[(V)] It is not known whether there exists a bounded operator on a separable Hilbert space $\cH$ such that \emph{every} non-zero vector in $\cH$ is cyclic.
 \item[(VI)] The set of cyclic vectors for a bounded operator $A$ on a Hilbert space $\cH$ is either empty or a dense subset of $\cH$ \cite{Geher-1972}. If $g$ is a cyclic vector for $A$, then also $g^{(n)}:=(1-\alpha A)^ng$ for any $|\alpha|\in(0,\|A\|^{-1})$ and for any $n\in\mathbb{N}$, and the $g^{(n)}$'s thus defined span the whole $\cH$. 
 \item[(VII)] A bounded operator $A$ on the separable Hilbert space $\cH$ is cyclic if and only if there is an orthonormal basis $(e_n)_n$ of $\cH$ with respect to which the matrix elements $a_{ij}:=\langle e_i,A e_j\rangle$ are such that $a_{ij}=0$ for $i>j+1$ and $a_{ij}\neq 0$ for $i=j+1$ (thus, $A$ is an upper Hessenberg infinite-dimensional matrix).
\end{itemize}

% For any cyclic operator, the space of cyclic vectors is dense in the Hilbert space (Godefroy and Shapiro). Furthermore in the work by Godefroy and Shapiro, it is shown that certainly any operator that commutes with the backward shift operator is known to be cyclic.

% ***More precisely, the truncation is made with respect to the orthonormal system obtained by the usual Gram-Schmidt orthonormalisation of the vectors $g,Ag,\dots,A^{N-1}g$.

\subsection{Krylov reducibility and Krylov intersection}~

For given $A\in\mathcal{B}(\cH)$ and $g\in\cH$, there is the orthogonal decomposition
\begin{equation}\label{eq:Krylov_decomposition}
 \cH\;=\;\overline{\mathcal{K}(A,g)}\,\oplus\,\mathcal{K}(A,g)^\perp
\end{equation}
that we shall often refer to as the \emph{Krylov decomposition} of $\cH$ relative to $A$ and $g$. The corresponding Krylov subspace is invariant under $A$ and its orthogonal complement is invariant under $A^*$, that is,
\begin{equation}\label{eq:invariances}
 A\,\mathcal{K}(A,g)\,\subset\,\mathcal{K}(A,g)\,,\qquad A^*\,\mathcal{K}(A,g)^\perp\,\subset\,\mathcal{K}(A,g)^\perp\,.
\end{equation}
The first statement is obvious and the second follows from $\langle A^*w,z\rangle=\langle w,Az\rangle=0$ $\forall z\in \mathcal{K}(A,g)$, where $w$ is a generic vector in $\mathcal{K}(A,g)^\perp$. Owing to the evident relations
\begin{equation}\label{eq:inclusions_closure}
 \begin{split}
 A \mathcal{V}\:\subset\:A\overline{\mathcal{V}}\:&\subset\:\overline{A\mathcal{V}\,}\;=\;\overline{A\overline{\mathcal{V}}\:} \\
 A\overline{\mathcal{V}}\:&=\:\overline{A\mathcal{V}\,}\quad\textrm{if }A^{-1}\in\mathcal{B}(\cH)\,,
 \end{split}
 \end{equation}
valid for any subspace $\mathcal{V}\subset\cH$ and any $A\in\mathcal{B}(\cH)$, \eqref{eq:invariances} implies also
\begin{equation}\label{eq:invariances2}
 A\,\overline{\mathcal{K}(A,g)}\,\subset\,\overline{\mathcal{K}(A,g)}\,.
\end{equation}

A relevant occurrence for our purposes is when the operator $A$ is \emph{reduced} by the Krylov decomposition \eqref{eq:Krylov_decomposition}, meaning that both $\overline{\mathcal{K}(A,g)}$ and $\mathcal{K}(A,g)^\perp$ are invariant under $A$. For short, we shall refer to this feature as the \emph{Krylov reducibility}, or also, to avoid ambiguity, $\mathcal{K}(A,g)$-reducibility. We shall discuss the relevance of this feature in the subsequent sections when we discuss general conditions for Krylov-solvability.

It follows from this definition that if $A$ is $\mathcal{K}(A,g)$-reduced, then $A^*$ is $\mathcal{K}(A,g)$-reduced too, and vice versa, as one sees from the following elementary Lemma, whose proof is omitted.

\begin{lemma}\label{lem:AreducedAstarreduced}
 If $A$ is a bounded operator on a Hilbert space $\cH$ and $\mathcal{V}\subset\cH$ is a \emph{closed} subspace, then properties (i) and (ii) below are equivalent:
 \begin{itemize}
  \item[(i)] $A\mathcal{V}\subset \mathcal{V}$ and $A\mathcal{V}^\perp\subset\mathcal{V}^\perp$;
  \item[(ii)] $A^*\mathcal{V}\subset \mathcal{V}$ and $A^*\mathcal{V}^\perp\subset\mathcal{V}^\perp$.
 \end{itemize}
% 
%  \[
%   \begin{array}{rl}
%    A\mathcal{V} \!\!\!& \subset\, \mathcal{V} \\
%    A\mathcal{V}^\perp \!\!\!& \subset\,\mathcal{V}^\perp
%   \end{array}\qquad \Leftrightarrow\qquad
%   \begin{array}{rl}
%    A^*\mathcal{V} \!\!\!& \subset\, \mathcal{V} \\
%    A^*\mathcal{V}^\perp \!\!\!& \subset\,\mathcal{V}^\perp\,.
%   \end{array}
%  \]
\end{lemma}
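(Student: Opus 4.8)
The plan is to reduce the equivalence of (i) and (ii) to a single elementary fact, which I would then apply twice. That fact is: for any $A\in\mathcal{B}(\cH)$ and any \emph{closed} subspace $\mathcal{W}\subset\cH$,
\[
 A\,\mathcal{W}\subset\mathcal{W}\qquad\Longleftrightarrow\qquad A^*\,\mathcal{W}^\perp\subset\mathcal{W}^\perp\,.
\]

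First I would establish this displayed equivalence by unfolding both sides into the same scalar condition. Since $\mathcal{W}$ is closed, $\mathcal{W}=\mathcal{W}^{\perp\perp}$, so $A\mathcal{W}\subset\mathcal{W}$ is equivalent to the statement that $\langle w,Az\rangle=0$ for all $z\in\mathcal{W}$ and all $w\in\mathcal{W}^\perp$; by the defining identity of the adjoint, $\langle w,Az\rangle=\langle A^*w,z\rangle$, this is in turn equivalent to $\langle A^*w,z\rangle=0$ for all such $z,w$, i.e.\ to $A^*w\in\mathcal{W}^\perp$ for every $w\in\mathcal{W}^\perp$, which is exactly $A^*\mathcal{W}^\perp\subset\mathcal{W}^\perp$. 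The closedness of $\mathcal{W}$ enters precisely through $\mathcal{W}=\mathcal{W}^{\perp\perp}$; without it only the implication ``$\Rightarrow$'' survives, which is the one already used in \eqref{eq:invariances}.

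Then I would conclude as follows. Condition (i) is the conjunction of $A\mathcal{V}\subset\mathcal{V}$ and $A\mathcal{V}^\perp\subset\mathcal{V}^\perp$. Applying the displayed equivalence with $\mathcal{W}=\mathcal{V}$, the first of these is equivalent to $A^*\mathcal{V}^\perp\subset\mathcal{V}^\perp$. Applying it with $\mathcal{W}=\mathcal{V}^\perp$ (which is again a closed subspace, with $(\mathcal{V}^\perp)^\perp=\mathcal{V}$ because $\mathcal{V}$ is closed), the second is equivalent to $A^*\mathcal{V}\subset\mathcal{V}$. Hence (i) holds if and only if both $A^*\mathcal{V}\subset\mathcal{V}$ and $A^*\mathcal{V}^\perp\subset\mathcal{V}^\perp$ hold, which is precisely (ii).

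I do not expect any genuine obstacle: the statement is essentially a bookkeeping exercise around the adjoint identity and the involutivity $\mathcal{V}^{\perp\perp}=\mathcal{V}$ valid for closed subspaces. The only point that warrants a little care is keeping track of the two orthogonal complements and observing that $\mathcal{V}^\perp$ is automatically closed, so that the basic equivalence is legitimately applicable to it as well; this is also the reason why the hypothesis that $\mathcal{V}$ be closed cannot be dropped.
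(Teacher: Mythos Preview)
Your argument is correct: the single equivalence $A\mathcal{W}\subset\mathcal{W}\iff A^*\mathcal{W}^\perp\subset\mathcal{W}^\perp$ for closed $\mathcal{W}$, applied once with $\mathcal{W}=\mathcal{V}$ and once with $\mathcal{W}=\mathcal{V}^\perp$, gives exactly the claimed equivalence, and you have correctly identified where the closedness of $\mathcal{V}$ enters (via $\mathcal{V}^{\perp\perp}=\mathcal{V}$). The paper itself omits the proof, calling the lemma elementary, so there is no alternative argument to compare against; your write-up is the natural way to fill in the omitted details and matches the spirit of the remark around \eqref{eq:invariances} that the paper alludes to.
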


\begin{example}\label{example:Kry-red}~
 \begin{itemize}
  \item[(i)] For generic $A\in\mathcal{B}(\cH)$ and $g\in\cH$, $A$ may fail to be $\mathcal{K}(A,g)$-reduced. All bounded self-adjoint operators are surely Krylov-reduced, owing to \eqref{eq:invariances}.
  \item[(ii)] Yet, Krylov reducibility is not a feature of self-adjoint operators only. To see this, let $A,B\in\mathcal{B}(\cH)$ and $\widetilde{A}:=A\oplus B:\cH\oplus\cH\to \cH\oplus\cH$. If $g\in\cH$ is a cyclic vector for $A$ in $\cH$ and $\widetilde{g}:=g\oplus 0$, then $\overline{\mathcal{K}(\widetilde{A},\widetilde{g})}=\cH\oplus\{0\}$, implying that $\mathcal{K}(\widetilde{A},\widetilde{g})^\perp=\{0\}\oplus\cH$. Therefore, $\widetilde{A}$ is $\mathcal{K}(\widetilde{A},\widetilde{g})$-reduced. On the other hand, $\widetilde{A}$ is self-adjoint (on $\cH\oplus\cH$) if and only if so are both $A$ and $B$ on $\cH$.
  \end{itemize}
\end{example}

For \emph{normal operators} we have the following equivalent characterisation of Krylov reducibility.

\begin{proposition}\label{prop:normops_Kryred_Astarg}
 Let $A$ be a bounded normal operator on a Hilbert space $\cH$ and let $g\in\cH$. Then $A$ is $\mathcal{K}(A,g)$-reduced if and only if $A^*g\in\overline{\mathcal{K}(A,g)}$.
\end{proposition}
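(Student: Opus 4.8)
Write $\mathcal{K}:=\mathcal{K}(A,g)$ for brevity. The plan is to first isolate an equivalent reformulation of Krylov reducibility that does not mention normality at all, and only afterwards bring normality to bear. By \eqref{eq:invariances2} one always has $A\overline{\mathcal{K}}\subset\overline{\mathcal{K}}$, and by the second inclusion in \eqref{eq:invariances} one always has $A^*\mathcal{K}^\perp\subset\mathcal{K}^\perp$. Applying Lemma~\ref{lem:AreducedAstarreduced} with the closed subspace $\mathcal{V}=\overline{\mathcal{K}}$ (so that $\mathcal{V}^\perp=\mathcal{K}^\perp$), both the pair of conditions (i) and the pair (ii) there collapse, in view of these two automatic facts, to a single remaining inclusion; hence
\[
 A \text{ is }\mathcal{K}(A,g)\text{-reduced}\ \Longleftrightarrow\ A\,\mathcal{K}^\perp\subset\mathcal{K}^\perp\ \Longleftrightarrow\ A^*\,\overline{\mathcal{K}}\subset\overline{\mathcal{K}}\,.
\]
So the task reduces to proving that $A^*\overline{\mathcal{K}}\subset\overline{\mathcal{K}}$ if and only if $A^*g\in\overline{\mathcal{K}}$.

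The ``only if'' direction I would dispatch immediately and without using normality: since $g\in\overline{\mathcal{K}}$, the inclusion $A^*\overline{\mathcal{K}}\subset\overline{\mathcal{K}}$ forces $A^*g\in\overline{\mathcal{K}}$.

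For the ``if'' direction, here is where normality enters, and it is the only place it is needed. Assume $A^*g\in\overline{\mathcal{K}}$. Because $A$ is normal, $A^*$ commutes with $A$, hence with every power $A^k$, so that $A^*(A^kg)=A^k(A^*g)$ for all $k\in\mathbb{N}_0$. Iterating \eqref{eq:invariances2} gives $A^k\overline{\mathcal{K}}\subset\overline{\mathcal{K}}$, and therefore $A^*(A^kg)=A^k(A^*g)\in\overline{\mathcal{K}}$ for every $k$. Thus $A^*$ maps the spanning family $\{A^kg\,|\,k\in\mathbb{N}_0\}$ of $\mathcal{K}$ into $\overline{\mathcal{K}}$, hence $A^*\mathcal{K}\subset\overline{\mathcal{K}}$ by linearity, and finally $A^*\overline{\mathcal{K}}\subset\overline{\mathcal{K}}$ by continuity of the bounded operator $A^*$; by the first step this is exactly the Krylov reducibility of $A$.

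I do not expect a genuine obstacle in this argument: the proof is short once one recognizes that Krylov reducibility is equivalent to the single invariance $A^*\overline{\mathcal{K}}\subset\overline{\mathcal{K}}$. The one point that deserves care is the role of normality — it is used, and needed, solely in the reverse implication, precisely to push $A^*$ through the powers $A^k$ so that $A^*$ applied to each Krylov vector stays inside $\overline{\mathcal{K}}$; in a non-normal situation this step, and the conclusion, generally fail.
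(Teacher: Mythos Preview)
Your proof is correct and follows essentially the same approach as the paper's. Both arguments reduce Krylov reducibility, via Lemma~\ref{lem:AreducedAstarreduced} and the automatic invariances \eqref{eq:invariances}--\eqref{eq:invariances2}, to the single condition $A^*\overline{\mathcal{K}}\subset\overline{\mathcal{K}}$, and both establish the nontrivial implication by using normality to write $A^*(A^kg)=A^k(A^*g)$; the paper phrases this step as the identity $A^*\mathcal{K}(A,g)=\mathcal{K}(A,A^*g)$, but the content is identical.
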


\begin{proof}
 If $A$ is $\mathcal{K}(A,g)$-reduced, then $\overline{\mathcal{K}(A,g)}$ is invariant under $A^*$ (Lemma \ref{lem:AreducedAstarreduced}), hence in particular $A^*g\in\overline{\mathcal{K}(A,g)}$. Conversely, if $A^*g\in\overline{\mathcal{K}(A,g)}$, then 
 \[
  \overline{\mathcal{K}(A, A^*g)}\;=\;\overline{\mathrm{span}\{A^kA^*g\,|\,k\in\mathbb{N}_0\}}\;\subset\;\overline{\mathcal{K}(A,g)}\,,
 \]
and moreover, since $A$ is normal, $A^*\mathcal{K}(A,g)=\mathcal{K}(A, A^*g)$; therefore (using \eqref{eq:inclusions_closure}),
\[
 A^*\overline{\mathcal{K}(A,g)}\;\subset\;\overline{\mathcal{K}(A, A^*g)}\;\subset\;\overline{\mathcal{K}(A,g)}\,.
\]
The latter property, together with $A^*\,\mathcal{K}(A,g)^\perp\subset\mathcal{K}(A,g)^\perp$ (from \eqref{eq:invariances} above) imply that $A^*$ is $\mathcal{K}(A,g)$-reduced, and so is $A$ itself, owing to Lemma \ref{lem:AreducedAstarreduced}. 
\end{proof}

For $A\in\mathcal{B}(\cH)$ and $g\in\cH$, an obvious consequence of $A$ being $\mathcal{K}(A,g)$-reduced is that $\overline{\mathcal{K}(A,g)}\cap (A\,\mathcal{K}(A,g)^\perp)=\{0\}$. For its relevance in the following, we shall call the intersection
\begin{equation}\label{eq:Krilov_intersection}
\overline{\mathcal{K}(A,g)}\,\cap\, (A\,\mathcal{K}(A,g)^\perp) 
\end{equation}
the \emph{Krylov intersection} for the given $A$ and $g$.

\begin{example}\label{example:Atheta}
The Krylov intersection may be trivial also in the absence of Krylov reducibility. This is already clear for finite-dimensional matrices: for example, taking (with respect to the Hilbert space $\mathbb{C}^2$)
\[
 A_\theta\,=\,\begin{pmatrix}
  1 & \cos\theta \\
  0 & \sin\theta
 \end{pmatrix}\qquad\theta\in(0,{\textstyle\frac{\pi}{2}}]\,,\qquad g\,=\,
 \begin{pmatrix}
  1 \\ 0
 \end{pmatrix},
\]
one sees that $A_\theta$ is $\mathcal{K}(A_\theta,g)$-reduced only when $\theta=\frac{\pi}{2}$, whereas the Krylov intersection \eqref{eq:Krilov_intersection} is trivial for any $\theta\in(0,\frac{\pi}{2})$.
\end{example}

\section{Krylov solutions for a bounded linear inverse problem}\label{sec:Krylov_soll}

\subsection{Krylov solvability. Examples.}~

Let us re-consider the bounded linear inverse problem of the type \eqref{eq:gen}: given $A\in\mathcal{B}(\cH)$ and the datum $g\in\mathrm{ran}A$, one searches for solution(s) $f\in\cH$ to $Af=g$.

The general question we are studying here is when $f$ to $Af=g$ admits arbitrarily close (in the norm of $\cH$) approximants expressed by finite linear combinations of the spanning vectors $A^kg$'s, or equivalently, $f$ belongs to the closure $\overline{\mathcal{K}(A,g)}$ of the Krylov subspace relative to $A$ and $g$.

A solution $f$ satisfying the above property is referred to as a \emph{Krylov solution}.

Informally, we shall use the expression \emph{Krylov solvability} for the feature that a linear inverse problem has a Krylov solution.

\begin{example}\label{example:Krylov_sol}~
\begin{itemize}
 \item[(i)] The self-adjoint multiplication operator $M:L^2[1,2]\to L^2[1,2]$, $\psi\mapsto x\psi$ is bounded and invertible with an everywhere defined bounded inverse. The solution to $Mf=\mathbf{1}$ is the function $f(x)=\frac{1}{x}$. Moreover, $\mathcal{K}(M,\mathbf{1})$ is the space of polynomials on $[1,2]$, hence it is \emph{dense} in $L^2[1,2]$: therefore $f$ is a Krylov solution.
 \item[(ii)] The multiplication operator $M_z:L^2(\Omega)\to L^2(\Omega)$, $f\mapsto zf$, where $\Omega\subset\mathbb{C}$ is a bounded open subset separated from the origin, say, $\Omega=\{z\in\mathbb{C}\,|\,|z-2|<1\}$, is a normal bounded bijection on $L^2(\Omega)$ (Sec.~\ref{sec:mult_annulus}), and the solution to $M_zf=g$ for given $g\in L^2(\Omega)$ is the function $f(z)=z^{-1}g(z)$.  Moreover, $\mathcal{K}(M_z,g)=\{p\,g\,|\,p\textrm{ a polynomial in $z$ on }\Omega\}$. One can see that $f\in\overline{\mathcal{K}(M_z,g)}$ and hence the problem $M_zf=g$ is Krylov-solvable. Indeed, $\Omega\ni z\mapsto z^{-1}$ is holomorphic and hence is realised by a uniformly convergent power series (e.g., the Taylor expansion of $z^{-1}$ about $z=2$). If $(p_n)_n$ is such a sequence of polynomial approximants, then
 \[
  \begin{split}
   \|f-p_n g\|_{L^2(\Omega)}\;&=\;\|(z^{-1}-p_n)g\|_{L^2(\Omega)} \\
   &\leqslant\;\|z^{-1}-p_n\|_{L^\infty(\Omega)}\|g\|_{L^2(\Omega)}\;\xrightarrow[]{n\to\infty}\;0\,.
  \end{split}
 \]
 \item[(iii)] The left-shift operator $L$ on $\ell^2(\mathbb{N}_0)$ (Sec.~\ref{sec:Rshift}) is bounded, not injective, and with range $\mathrm{ran}L=\ell^2(\mathbb{N}_0)$. The solution to $Lf=g$ with $g:=\sum_{n\in\mathbb{N}_0}\frac{1}{n!}e_n$ is $f=\sum_{n\in\mathbb{N}_0}\frac{1}{n!}e_{n+1}$. Moreover, $\mathcal{K}(L,g)$ is \emph{dense} in $\ell^2(\mathbb{N}_0)$ and therefore $f$ is a Krylov solution. To see the density of $\mathcal{K}(L,g)$: the vector $e_0$ belongs to $\overline{\mathcal{K}(L,g)}$ because
 \[
  \begin{split}
   \|k!\, L^k g-e_0\|_{\ell^2}^2\;&=\;\|\textstyle(1,\frac{1}{k+1},\frac{1}{(k+2)(k+1)},\cdots)-(1,0,0,\dots)\|_{\ell^2}^2 \\
   &=\;\sum_{n=1}^\infty\Big(\frac{k!}{(n+k)!}\Big)^2\;\xrightarrow[]{\;k\to\infty\;}\;0\,.
  \end{split}
 \]
 As a consequence, $ (0,\textstyle\frac{1}{k!},\frac{1}{(k+1)!},\frac{1}{(k+2)!},\cdots)=L^{k-1}g-(k-1)!\,e_0\in\overline{\mathcal{K}_g(L)}$,
%  \[
%   (0,\textstyle\frac{1}{k!},\frac{1}{(k+1)!},\frac{1}{(k+2)!},\cdots)\;=\;L^{k-1}g-(k-1)!\,e_0\;\in\; \overline{\mathcal{K}_g(L)}
%  \]
therefore the vector $e_1$ too belongs to $\overline{\mathcal{K}_g(L)}$, because
\[
 \|k!\,(L^{k-1}g-(k-1)!\,e_0)-e_1\|_{\ell^2}^2\;=\;\sum_{n=1}^\infty\Big(\frac{k!}{(n+k)!}\Big)^2\;\xrightarrow[]{\;k\to\infty\;}\;0\,.
\]
 Repeating inductively the above two-step argument proves that any $e_n\in\overline{\mathcal{K}(L,g)}$, whence the cyclicity of $g$. 
  \item[(iv)] The right-shift operator $R$ on $\ell^2(\mathbb{N})$ (Sec.~\ref{sec:Rshift}) is bounded and injective, with non-dense range, and the solution to $Rf=e_2$ is $f=e_1$. However, $f$ is not a Krylov solution, for $\overline{\mathcal{K}(R,e_2)}=\overline{\mathrm{span}\{e_2,e_3,\dots\}}$. The problem $Rf=e_2$ is not Krylov-solvable.
  %\item[(iv)] A solution $f$ to $Af=g$, where $g\in\mathrm{ran}A$ and $\dim\mathrm{ran}A^\perp=\dim\ker A^*\geqslant 2$
  \item[(v)] The compact (weighted) right-shift operator $\mathcal{R}$ on $\ell^2(\mathbb{Z})$ (Sec.~\ref{sec:compactRshift-Z}) is normal, injective, and with dense range, and the solution to $\mathcal{R}f=\sigma_1 e_2$ is $f=e_1$. However, $f$ is not a Krylov solution, for $\overline{\mathcal{K}(\mathcal{R},e_2)}=\overline{\mathrm{span}\{e_2,e_3,\dots\}}$. The problem $\mathcal{R}f=\sigma_1e_2$ is not Krylov-solvable.
  \item[(vi)] Let $A$ be a bounded injective operator on a Hilbert space $\cH$ with cyclic vector $g\in\mathrm{ran}A$ and let $\varphi_0\in\cH\setminus\{0\}$. Let $f\in\cH$ be the solution to $Af=g$. The operator $\widetilde{A}:=A\oplus |\varphi_0\rangle\langle\varphi_0|$ on $\widetilde{\cH}:=\cH\oplus\cH$ is bounded. One solution to $\widetilde{A}\widetilde{f}=\widetilde{g}:=g\oplus 0$ is $\widetilde{f}=f\oplus 0$ and $\widetilde{f}\in\cH\oplus\{0\}=\overline{\mathcal{K}(\widetilde{A},\widetilde{g})}$. Another solution is $\widetilde{f}_\xi=f\oplus\xi$, where $\xi\in\cH\setminus\{0\}$ and $\xi\perp\varphi_0$. Clearly, $\widetilde{f}_\xi\notin\overline{\mathcal{K}(\widetilde{A},\widetilde{g})}$.
  \item[(vii)] If $V$ is the Volterra operator on $L^2[0, 1]$ (Sec.~\ref{sec:Volterra}) and $g(x) = \frac{1}{2}x^2$, then $f(x) = x$ is the unique solution to $Vf=g$. On the other hand, $\mathcal{K}(V,g)$ is spanned by the monomials $x^2,x^3,x^4,\dots$, whence 
  \[
  \mathcal{K}(V,g) \;=\; \{x^2p(x)\,|\, p\textrm{ is a polynomial on } [0,1]\}\,. 
  \]
  Therefore $f\notin \mathcal{K}(V,g)$, because $f(x)=x^2\cdot\frac{1}{x}$ and $\frac{1}{x}\notin L^2[0,1]$. Yet, $f\in \overline{\mathcal{K}(V,g)}$, because in fact $\mathcal{K}(V,g)$ is \emph{dense} in $L^2[0,1]$.
  Indeed, if $h\in\mathcal{K}(V,g)^\perp$, then $0=\int_0^1\overline{h(x)}\,x^2p(x)\,\ud x$ for any polynomial $p$; the $L^2$-density of polynomials on $[0,1]$ implies necessarily that $x^2h=0$, whence also $h=0$; this proves that $\mathcal{K}(V,g)^\perp=\{0\}$ and hence $\overline{\mathcal{K}(V,g)}=L^2[0,1]$.
\end{itemize}
\end{example}

\subsection{General conditions for Krylov solvability}~

Even stringent assumptions on $A$ such as the simultaneous occurrence of compactness, normality, injectivity, and density of the range do \emph{not} ensure, in general, that the solution $f$ to $Af=g$, for given $g\in\mathrm{ran}A$, is a Krylov solution (Example \ref{example:Krylov_sol}(v)). 

A \emph{necessary} condition for the solution to a well-defined bounded linear inverse problem to be a Krylov solution, which becomes necessary and sufficient if the linear map is a bounded bijection, is the following. (Recall that for $A\in\mathcal{B}(\cH)$ these three properties are \emph{equivalent}: $A$ is a bijection; $A$ is invertible with everywhere defined, bounded inverse on $\cH$; the spectral point 0 belongs to the resolvent set of $A$.)

\begin{proposition}\label{prop:fKry_iff_AKdenseK}
Let $A$ be a bounded and injective operator on a Hilbert space $\cH$, and let $f\in\cH$ be the solution to $Af=g$, given $g\in\mathrm{ran}A$. One has the following.
 \begin{itemize}
 \item[(i)] If $f\in\overline{\mathcal{K}(A,g)}$, then $A\,\overline{\mathcal{K}(A,g)}$ is dense in $\overline{\mathcal{K}(A,g)}$. 
 \item[(ii)] Assume further that $A$ is invertible with everywhere defined, bounded inverse on $\cH$. Then $f\in\overline{\mathcal{K}(A,g)}$ if and only if $A\,\overline{\mathcal{K}(A,g)}$ is dense in $\overline{\mathcal{K}(A,g)}$. 
 \end{itemize}
\end{proposition}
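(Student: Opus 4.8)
The plan is to exploit the Krylov decomposition $\cH=\overline{\mathcal{K}(A,g)}\oplus\mathcal{K}(A,g)^\perp$ together with the invariance $A\,\overline{\mathcal{K}(A,g)}\subset\overline{\mathcal{K}(A,g)}$ from \eqref{eq:invariances2}, which guarantees that $A$ restricts to a bounded operator $A_0:\overline{\mathcal{K}(A,g)}\to\overline{\mathcal{K}(A,g)}$. For part (i), suppose $f\in\overline{\mathcal{K}(A,g)}$. I would first observe that every spanning vector $A^kg$ lies in the range of $A_0$: indeed $g=Af\in A_0\,\overline{\mathcal{K}(A,g)}$ since $f\in\overline{\mathcal{K}(A,g)}$, and then $A^kg=A_0^k g=A_0(A_0^{k-1}g)$ for $k\geq1$, while $A^kg\in A_0\,\overline{\mathcal{K}(A,g)}$ directly for $k\geq 1$. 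Hence $\mathcal{K}(A,g)\subset\ran A_0=A\,\overline{\mathcal{K}(A,g)}$, and since $\mathcal{K}(A,g)$ is dense in $\overline{\mathcal{K}(A,g)}$, so is the larger set $A\,\overline{\mathcal{K}(A,g)}$. This settles (i) with essentially no analytic input beyond the inclusions.

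For part (ii), the nontrivial direction is the converse: assuming $A^{-1}\in\mathcal{B}(\cH)$ and that $A\,\overline{\mathcal{K}(A,g)}$ is dense in $\overline{\mathcal{K}(A,g)}$, I want to conclude $f\in\overline{\mathcal{K}(A,g)}$. The idea is that when $A$ is a bounded bijection, the second line of \eqref{eq:inclusions_closure} gives $A\,\overline{\mathcal{K}(A,g)}=\overline{A\,\mathcal{K}(A,g)}=\overline{\mathcal{K}(A,g)}$, where the last equality holds because $A\,\mathcal{K}(A,g)=\mathrm{span}\{A^{k}g\mid k\geq 1\}$ is dense in $\overline{\mathcal{K}(A,g)}$: the only potentially missing vector is $g$ itself, but by hypothesis $g=Af$ and we are trying to locate $f$, so this needs care. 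The cleaner route is: density of $A\,\overline{\mathcal{K}(A,g)}$ in $\overline{\mathcal{K}(A,g)}$ together with $A\,\overline{\mathcal{K}(A,g)}$ being \emph{closed} (as the image of a closed subspace under a bounded bijection, using $A^{-1}$ bounded) forces $A\,\overline{\mathcal{K}(A,g)}=\overline{\mathcal{K}(A,g)}$. Since $g\in\mathcal{K}(A,g)\subset\overline{\mathcal{K}(A,g)}=A\,\overline{\mathcal{K}(A,g)}$, there is some $h\in\overline{\mathcal{K}(A,g)}$ with $Ah=g$; by injectivity of $A$, $h=f$, hence $f\in\overline{\mathcal{K}(A,g)}$.

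The one point demanding genuine attention — and the step I expect to be the main obstacle — is the claim that $A\,\overline{\mathcal{K}(A,g)}$ is closed. This is where the hypothesis $A^{-1}\in\mathcal{B}(\cH)$ is essential and cannot be weakened to mere injectivity (Examples \ref{example:Krylov_sol}(iv)–(v) show the statement fails otherwise). The argument is: if $\mathcal{V}:=\overline{\mathcal{K}(A,g)}$ is closed and $A$ is a bounded bijection with bounded inverse, then $A\mathcal{V}=(A^{-1})^{-1}(\mathcal{V})$ is the preimage of a closed set under the continuous map $A^{-1}$, hence closed. One should double-check the harmless identity $A\,\overline{\mathcal{K}(A,g)}=\overline{A\,\mathcal{K}(A,g)}$ from \eqref{eq:inclusions_closure} is consistent with this, but the preimage argument is self-contained. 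Everything else is bookkeeping with the orthogonal decomposition and the elementary fact that a dense closed subspace of $\mathcal{V}$ equals $\mathcal{V}$.
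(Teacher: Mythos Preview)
Your proof is correct and follows essentially the same approach as the paper's. Part (i) is identical in content; for part (ii) the paper runs the explicit Cauchy-sequence argument (take $Av_n\to g$ with $v_n\in\overline{\mathcal{K}(A,g)}$, use boundedness of $A^{-1}$ to get $v_n\to v\in\overline{\mathcal{K}(A,g)}$, conclude $f=v$), which is precisely the content of your closedness claim $A\,\overline{\mathcal{K}(A,g)}=(A^{-1})^{-1}\big(\overline{\mathcal{K}(A,g)}\big)$---indeed the second line of \eqref{eq:inclusions_closure} already records this as $A\overline{\mathcal{V}}=\overline{A\mathcal{V}}$.
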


\begin{proof}
 One has $A\,\overline{\mathcal{K}(A,g)}\supset A\,\mathcal{K}(A,g)=\mathrm{span}\{A^kg\,|\,k\in\mathbb{N}\}$, owing to the definition of Krylov subspace and to \eqref{eq:inclusions_closure}. If $f\in\overline{\mathcal{K}(A,g)}$, then $A\,\overline{\mathcal{K}(A,g)}\ni Af=g$, in which case $A\,\overline{\mathcal{K}(A,g)}\supset \mathrm{span}\{A^kg\,|\,k\in\mathbb{N}_0\}$; the latter inclusion, by means of \eqref{eq:inclusions_closure} and \eqref{eq:invariances2}, implies $\overline{\mathcal{K}(A,g)}\supset\overline{A\,\overline{\mathcal{K}(A,g)}}\supset\overline{\mathcal{K}(A,g)}$, whence
 $\overline{A\,\overline{\mathcal{K}(A,g)}}=\overline{\mathcal{K}(A,g)}$. This proves part (i) and the `only if' implication in part (ii). For the converse, let us now assume that $A^{-1}\in\mathcal{B}(\cH)$ and that $A\,\overline{\mathcal{K}(A,g)}$ is dense in $\overline{\mathcal{K}(A,g)}$. Let $(Av_n)_{n\in\mathbb{N}}$ be a sequence in $A\,\overline{\mathcal{K}(A,g)}$ of approximants of $g\in \overline{\mathcal{K}(A,g)}$ for some $v_n$'s in $\overline{\mathcal{K}(A,g)}$. Since $A^{-1}$ is bounded on $\cH$, then $(v_n)_{n\in\mathbb{N}}$ is a Cauchy sequence, hence $v_n\to v$ as $n\to\infty$ for some $v\in\overline{\mathcal{K}(A,g)}$. By continuity, $Af=g=\lim_{n\to\infty}Av_n=Av$, and by injectivity $f=v\in\overline{\mathcal{K}(A,g)}$, which proves also the `if' implication of part (ii).
\end{proof}

A \emph{sufficient} condition for the Krylov solvability of a well-defined bounded linear inverse problem is the Krylov reducibility introduced in Sec.~\ref{sec:Krylov}.

\begin{proposition}\label{prop:Kry-reduc-implies-Kry-solv}
 Let $A$ be a bounded and injective operator on a Hilbert space $\cH$, and let $f\in\cH$ be the solution to $Af=g$, given $g\in\mathrm{ran}A$. If $A$ is $\mathcal{K}(A,g)$-reduced, then $f\in\overline{\mathcal{K}(A,g)}$. In particular, if $A$ is bounded, injective, and self-adjoint, then $Af=g$ implies $f\in\overline{\mathcal{K}(A,g)}$.
\end{proposition}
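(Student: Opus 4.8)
The plan is to exploit the reducedness of $A$ to show that $A$ maps $\overline{\mathcal{K}(A,g)}$ \emph{onto} (a dense subset of) itself, and then invoke Proposition~\ref{prop:fKry_iff_AKdenseK}(i) in reverse — or rather, to argue directly. First I would set $\mathcal{K}:=\overline{\mathcal{K}(A,g)}$ for brevity and recall from \eqref{eq:invariances2} that $A\mathcal{K}\subset\mathcal{K}$, so $A$ restricts to a bounded operator $A|_{\mathcal{K}}:\mathcal{K}\to\mathcal{K}$. The key observation is that $A\mathcal{K}(A,g)=\mathrm{span}\{A^k g\mid k\in\mathbb{N}\}$, and since $g\in\mathrm{ran}A$ there is a (unique, by injectivity) $f$ with $Af=g$; the question is whether $f\in\mathcal{K}$.

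The heart of the argument is the following: decompose $f=f_0+f_1$ with $f_0\in\mathcal{K}$ and $f_1\in\mathcal{K}(A,g)^\perp$, using the Krylov decomposition \eqref{eq:Krylov_decomposition}. Then $g=Af=Af_0+Af_1$. Now $Af_0\in\mathcal{K}$ by \eqref{eq:invariances2}, and here is where Krylov reducibility enters: because $A$ is $\mathcal{K}(A,g)$-reduced, $A\,\mathcal{K}(A,g)^\perp\subset\mathcal{K}(A,g)^\perp$, so $Af_1\in\mathcal{K}(A,g)^\perp$. Thus $g=Af_0+Af_1$ is the orthogonal decomposition of $g$ with respect to \eqref{eq:Krylov_decomposition}. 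But $g\in\mathcal{K}$ entirely, so its component in $\mathcal{K}(A,g)^\perp$ vanishes, i.e.\ $Af_1=0$. By injectivity of $A$, $f_1=0$, hence $f=f_0\in\mathcal{K}=\overline{\mathcal{K}(A,g)}$, which is exactly the claim. The final sentence of the Proposition then follows immediately: a bounded self-adjoint operator is in particular normal and satisfies $A^*\mathcal{K}(A,g)^\perp=A\,\mathcal{K}(A,g)^\perp\subset\mathcal{K}(A,g)^\perp$ together with $A\,\mathcal{K}(A,g)\subset\mathcal{K}(A,g)$ from \eqref{eq:invariances}, so by Lemma~\ref{lem:AreducedAstarreduced} (or directly) $A$ is $\mathcal{K}(A,g)$-reduced, and the general statement applies.

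I expect no serious obstacle here; the proof is short once one writes the decomposition of $f$ and observes that reducedness forces the two pieces $Af_0$ and $Af_1$ to live in the two summands separately. The only point requiring a moment's care is the passage from $f=f_0$ (which places $f$ in $\mathcal{K}$) back to the statement — that is immediate — and making sure that in the self-adjoint case one correctly identifies the hypothesis of Lemma~\ref{lem:AreducedAstarreduced}, namely that one needs $\mathcal{V}=\overline{\mathcal{K}(A,g)}$ \emph{closed}, which it is by construction. An alternative, slightly slicker route avoids the explicit decomposition: since $A$ is reduced, $A|_{\mathcal{K}}$ and $A|_{\mathcal{K}^\perp}$ are both injective bounded operators, and $g\in\mathcal{K}\cap\mathrm{ran}A = \mathrm{ran}(A|_{\mathcal{K}})$ — the last equality again by reducedness — so $f=(A|_{\mathcal{K}})^{-1}g\in\mathcal{K}$. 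Either phrasing works; I would present the decomposition version as it is the most transparent and uses only \eqref{eq:Krylov_decomposition}, \eqref{eq:invariances}, \eqref{eq:invariances2}, and injectivity.
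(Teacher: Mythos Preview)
Your proof is correct and essentially identical to the paper's: with $f_0 = P_{\mathcal{K}}f$ and $f_1 = (\mathbbm{1}-P_{\mathcal{K}})f$, the paper likewise shows $Af_1 \in \overline{\mathcal{K}(A,g)} \cap \mathcal{K}(A,g)^\perp = \{0\}$ and concludes by injectivity. (One aside: your ``slicker alternative route'' is not really shorter, since establishing the equality $\mathcal{K}\cap\mathrm{ran}A = \mathrm{ran}(A|_{\mathcal{K}})$ requires precisely the decomposition-and-injectivity argument you already gave.)
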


\begin{proof}
 Let $P_\mathcal{K}:\cH\to\cH$ be the orthogonal projection onto  $\overline{\mathcal{K}(A,g)}$. On the one hand, $A(\mathbbm{1}-P_\mathcal{K})f\in\overline{\mathcal{K}(A,g)}$, because $AP_\mathcal{K}f\in\overline{\mathcal{K}(A,g)}$ (from \eqref{eq:invariances2} above) and $Af=g\in\overline{\mathcal{K}(A,g)}$. On the other hand, owing to the Krylov reducibility, $A(\mathbbm{1}-P_\mathcal{K})f\in\mathcal{K}(A,g)^\perp$. Then necessarily $A(\mathbbm{1}-P_\mathcal{K})f=0$, and by injectivity $f=P_\mathcal{K}f\in\overline{\mathcal{K}(A,g)}$. 
\end{proof}

In the above proof, Krylov reducibility was only used to deduce that the vector $A(\mathbbm{1}-P_\mathcal{K})f\in A\,\mathcal{K}(A,g)^\perp$ must belong to $\mathcal{K}(A,g)^\perp$; thus, the vanishing of $A(\mathbbm{1}-P_\mathcal{K})f$ -- and hence the same conclusion -- follows also by merely assuming that the Krylov intersection $\overline{\mathcal{K}(A,g)}\cap(A\,\mathcal{K}(A,g)^\perp)$ is trivial. And for bounded bijections, the latter sufficient condition becomes also necessary.

\begin{proposition}\label{prop:KrylIntTriv_KrylSol}
 Let $A$ be a bounded and injective operator on a Hilbert space $\cH$, and let $f\in\cH$ be the solution to $Af=g$, given $g\in\mathrm{ran}A$.
 \begin{itemize}
  \item[(i)] If $\overline{\mathcal{K}(A,g)}\cap(A\,\mathcal{K}(A,g)^\perp)=\{0\}$, then $f\in\overline{\mathcal{K}(A,g)}$.
  \item[(ii)] Assume further that $A$ is invertible with everywhere defined, bounded inverse on $\cH$. Then $f\in\overline{\mathcal{K}(A,g)}$ if and only if $\overline{\mathcal{K}(A,g)}\cap(A\,\mathcal{K}(A,g)^\perp)=\{0\}$.
 \end{itemize}
\end{proposition}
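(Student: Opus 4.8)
The plan is to dispatch part~(i) by the same argument that is already implicit in the remark following Proposition~\ref{prop:Kry-reduc-implies-Kry-solv}, and then to obtain the converse in part~(ii) by upgrading the mere density statement of Proposition~\ref{prop:fKry_iff_AKdenseK} to an honest equality of subspaces, the boundedness of $A^{-1}$ being exactly what makes this upgrade possible.

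For part~(i) I would let $P_\mathcal{K}$ be the orthogonal projection of $\cH$ onto $\overline{\mathcal{K}(A,g)}$ and decompose $f=P_\mathcal{K}f+(\mathbbm{1}-P_\mathcal{K})f$. Since $AP_\mathcal{K}f\in\overline{\mathcal{K}(A,g)}$ by \eqref{eq:invariances2} and $Af=g\in\overline{\mathcal{K}(A,g)}$ (because $g=A^0g$), the vector $A(\mathbbm{1}-P_\mathcal{K})f=g-AP_\mathcal{K}f$ lies in $\overline{\mathcal{K}(A,g)}$; on the other hand $(\mathbbm{1}-P_\mathcal{K})f\in\mathcal{K}(A,g)^\perp$, so $A(\mathbbm{1}-P_\mathcal{K})f\in A\,\mathcal{K}(A,g)^\perp$. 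Hence $A(\mathbbm{1}-P_\mathcal{K})f$ belongs to the Krylov intersection, which by hypothesis is $\{0\}$; the injectivity of $A$ then forces $(\mathbbm{1}-P_\mathcal{K})f=0$, that is, $f=P_\mathcal{K}f\in\overline{\mathcal{K}(A,g)}$. This settles part~(i) and, a fortiori, the `if' implication of part~(ii).

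For the `only if' implication of part~(ii) I would assume $A^{-1}\in\mathcal{B}(\cH)$ and $f\in\overline{\mathcal{K}(A,g)}$, and first establish the identity $A\,\overline{\mathcal{K}(A,g)}=\overline{\mathcal{K}(A,g)}$. Indeed, Proposition~\ref{prop:fKry_iff_AKdenseK}(ii) gives that $A\,\overline{\mathcal{K}(A,g)}$ is dense in $\overline{\mathcal{K}(A,g)}$; but $A$, being a bounded bijection with bounded inverse, is a linear homeomorphism of $\cH$, so it maps the closed subspace $\overline{\mathcal{K}(A,g)}$ onto a closed subspace, which moreover is contained in $\overline{\mathcal{K}(A,g)}$ by \eqref{eq:invariances2}; a dense closed subspace of $\overline{\mathcal{K}(A,g)}$ being all of it, the claimed equality follows. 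With it in hand, let $w\in\overline{\mathcal{K}(A,g)}\cap(A\,\mathcal{K}(A,g)^\perp)$ and write $w=Au$ with $u\in\mathcal{K}(A,g)^\perp$; since $w\in\overline{\mathcal{K}(A,g)}=A\,\overline{\mathcal{K}(A,g)}$ there is $v\in\overline{\mathcal{K}(A,g)}$ with $Av=w=Au$, and injectivity gives $u=v\in\overline{\mathcal{K}(A,g)}\cap\mathcal{K}(A,g)^\perp=\{0\}$, whence $w=Au=0$. Thus the Krylov intersection is trivial, which completes the equivalence.

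No step is technically deep; the point that deserves care — and which I expect to be the only real obstacle — is precisely the passage from the \emph{density} of $A\,\overline{\mathcal{K}(A,g)}$ in $\overline{\mathcal{K}(A,g)}$ to the \emph{equality} of the two subspaces, where the boundedness of $A^{-1}$ is indispensable: a linear homeomorphism sends closed subspaces to closed subspaces, and it is this that makes the pre-image argument in the final step available.
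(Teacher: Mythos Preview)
Your proof is correct. Part~(i) is handled exactly as in the paper. For the `only if' direction of part~(ii), your route differs slightly from the paper's: both arguments start from Proposition~\ref{prop:fKry_iff_AKdenseK}(ii) to obtain that $A\,\overline{\mathcal{K}(A,g)}$ is dense in $\overline{\mathcal{K}(A,g)}$, but you then immediately upgrade this to the equality $A\,\overline{\mathcal{K}(A,g)}=\overline{\mathcal{K}(A,g)}$ by observing that a bounded bijection with bounded inverse maps closed subspaces onto closed subspaces (this is essentially the second line of \eqref{eq:inclusions_closure}), after which the triviality of the Krylov intersection follows by a one-line preimage argument via injectivity. The paper instead works directly with the density statement: given $z=Aw$ in the intersection, it picks approximants $Ax_n\to z$ with $x_n\in\overline{\mathcal{K}(A,g)}$, uses boundedness of $A^{-1}$ to get $x_n\to w$, and then invokes the Pythagorean identity $\|x_n-w\|^2=\|x_n\|^2+\|w\|^2$ (from $x_n\perp w$) to force $w=0$. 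Your version is arguably cleaner, as it sidesteps the sequence computation by front-loading the closedness observation; both, however, rest on the same essential ingredient, namely the boundedness of $A^{-1}$.
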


\begin{proof}
 Part (i) and the `if' implication of part (ii) follow as argued right before stating the Proposition. Conversely, if $f\in\overline{\mathcal{K}(A,g)}$ and $A^{-1}\in\mathcal{B}(\cH)$, then $A\,\overline{\mathcal{K}(A,g)}$ is dense in $\overline{\mathcal{K}(A,g)}$ (Proposition \ref{prop:fKry_iff_AKdenseK}(ii)). 
 %Moreover, the injectivity of $A$ obviously implies $A\overline{\mathcal{K}(A,g)}\cap A(\mathcal{K}(A,g)^\perp)=\{0\}$. 
 Let now $z\in\overline{\mathcal{K}(A,g)}\cap(A\,\mathcal{K}(A,g)^\perp)$, say, $z=Aw$ for some $w\in\mathcal{K}(A,g)^\perp$, and based on the density proved above let $(Ax_n)_{n\in\mathbb{N}}$ be a sequence in $A\,\overline{\mathcal{K}(A,g)}$ of approximants of $z$ for some $x_n$'s in $\overline{\mathcal{K}(A,g)}$. From $Ax_n\to z=Aw$ and $\|A^{-1}\|_{\mathrm{op}}<+\infty$ one has $x_n\to w$ as $n\to\infty$. Since $x_n\perp w$, then
 \[
  0\;=\;\lim_{n\to\infty}\|x_n-w\|_{\cH}^2\;=\;\lim_{n\to\infty}\big(\|x_n\|_{\cH}^2+\|w\|_{\cH}^2\big)\;=\;2\|w\|_{\cH}^2\,,
 \]
 whence necessarily $w=0$ and $z=0$. This proves the `only if' implication of (ii).
 \end{proof}

 Propositions \ref{prop:fKry_iff_AKdenseK}(ii) and \ref{prop:KrylIntTriv_KrylSol}(ii) provide equivalent conditions to the Krylov solvability of linear inverse problems on Hilbert space when the linear maps are bounded bijections. (As such, these results do not apply to compact operators on infinite-dimensional Hilbert spaces.)

 In particular, Proposition \ref{prop:KrylIntTriv_KrylSol}(ii) shows that \emph{for such linear inverse problems the Krylov solvability is tantamount as the triviality of the Krylov intersection}, which was the actual reason to introduce the space \eqref{eq:Krilov_intersection}.

 \begin{remark}
  Clearly our interest here is to discuss the possible \emph{occurrence} of Krylov solvability, in the spirit that if the solution $f$ to $Af=g$ is a Krylov solution, then it has the practically favourable feature to be well approximable by linear combinations of $g,Ag,A^2g,\dots$ through one of the many efficient iterative algorithms of the class of the `Krylov subspace methods'. In this case, the next crucial question concerns the \emph{rate of convergence} of the approximate iterate to the actual $f$, a point of view that we do not develop in the present work, but of course is the object of an ample part of the literature -- see, e.g., the monographs \cite{Nevanlinna-1993-converg-iterat-book,Engl-Hanke-Neubauer-1996,Saad-2003_IterativeMethods}.
 \end{remark}

 \subsection{Krylov reducibility and Krylov solvability}~

 Concerning the relation between the Krylov reducibility and the Krylov solvability, we know that the former implies the latter (Prop.~\ref{prop:Kry-reduc-implies-Kry-solv}).

 Moreover, there are classes of operators for which the two notions coincide, as the following remark shows.

 %(Remark \ref{rem:unitaries_KrySolv_KryDecomp}), and yet there are Krylov-solvable problems that are not Krylov-reduced. % (Remark \ref{rem:M_annulus_notKryRed}).

\begin{remark}\label{rem:unitaries_KrySolv_KryDecomp}
 For \emph{unitary operators}, the Krylov solvability of the associated inverse problem is \emph{equivalent} to the Krylov-reducibility. Indeed, when $U:\cH\to\cH$ is unitary and $f=U^*g$ is the solution to $Uf=g$ for some $g\in\cH$, then the assumption $f\in\overline{\mathcal{K}(U,g)}$ implies $U^*g\in\overline{\mathcal{K}(U,g)}$, which by Proposition \ref{prop:normops_Kryred_Astarg} is the same as the fact that $U$ is $\mathcal{K}(U,g)$-reduced.
\end{remark}

 There are also Krylov-solvable inverse problems whose operator is not Krylov-reduced, even among well-defined inverse problems, namely when $A$ is bounded and injective and $g\in\mathrm{ran}A$. This is the case of Example \ref{example:Atheta} when $\theta\neq\frac{\pi}{2}$.

 Even in the relevant class of (bounded, injective) \emph{normal} operators (the operator $A_\theta$ of Example \ref{example:Atheta} is \emph{not} normal), Krylov solvability does not necessarily imply Krylov reducibility. Let us discuss a counterexample that elucidates that.

 We need first to recall the following fact from complex functional analysis -- see, e.g., \cite[Theorem 4.4.10]{caruso-thesis-2019}.

 \begin{proposition}\label{prop:closingholo}
  Let $\mathcal{U}\subset\mathbb{C}$ be an open subset of the complex plane, and denote by $H(\mathcal{U})$ the space of holomorphic functions on $\mathcal{U}$. Then the space $H(\mathcal{U})\cap L^2(\mathcal{U})$ is closed in $L^2(\mathcal{U})$.
 \end{proposition}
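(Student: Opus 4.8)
The plan is to show that if a sequence $(h_n)_n$ in $H(\mathcal{U})\cap L^2(\mathcal{U})$ converges in $L^2(\mathcal{U})$ to some $h\in L^2(\mathcal{U})$, then $h$ agrees a.e.\ with a holomorphic function on $\mathcal{U}$. The key analytic ingredient is a local \emph{mean-value / Bergman-type pointwise estimate}: for any point $z_0\in\mathcal{U}$ and any closed disc $\overline{D(z_0,r)}\subset\mathcal{U}$, every $\varphi\in H(\mathcal{U})$ satisfies
\[
 |\varphi(z_0)|\;\leqslant\;\frac{1}{\sqrt{\pi}\,r}\,\|\varphi\|_{L^2(D(z_0,r))}\,,
\]
which follows by writing $\varphi(z_0)$ as the average of $\varphi$ over concentric circles (Cauchy's integral formula applied on each circle $|z-z_0|=\rho$, $0<\rho<r$) and then integrating in $\rho$ against $\rho\,\ud\rho$ and applying Cauchy--Schwarz. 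Applied to $\varphi=h_n-h_m$, this bound shows that on every compact subset $K\subset\mathcal{U}$ the sequence $(h_n)_n$ is uniformly Cauchy (cover $K$ by finitely many discs whose doubles still lie in $\mathcal{U}$, and bound the sup over each disc by the $L^2$-norm over a slightly larger disc). Hence $h_n\to \widetilde h$ uniformly on compacta for some function $\widetilde h$, and by the Weierstrass convergence theorem $\widetilde h\in H(\mathcal{U})$.

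Next I would identify $\widetilde h$ with $h$ as elements of $L^2(\mathcal{U})$. Uniform-on-compacta convergence gives $h_n\to\widetilde h$ in $L^2(K)$ for every compact $K$, while by hypothesis $h_n\to h$ in $L^2(\mathcal{U})$, hence in $L^2(K)$; therefore $h=\widetilde h$ a.e.\ on each $K$, and exhausting $\mathcal{U}$ by a countable increasing family of compact sets yields $h=\widetilde h$ a.e.\ on $\mathcal{U}$. Thus $h$ has a holomorphic representative, i.e.\ $h\in H(\mathcal{U})\cap L^2(\mathcal{U})$, which is exactly closedness of this subspace in $L^2(\mathcal{U})$.

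The only mild subtlety — and the step I would be most careful about — is the passage from the pointwise estimate to \emph{locally uniform} Cauchyness: one must choose, for each compact $K\subset\mathcal{U}$, a uniform radius $r>0$ with $\mathrm{dist}(K,\partial\mathcal{U})>2r$, so that the disc $D(z,r)$ used in the estimate for $z\in K$ has its $L^2$-mass controlled by $\|h_n-h_m\|_{L^2(\mathcal{U})}$ uniformly in $z$; after that everything is routine. (One could alternatively cite the standard fact that the Bergman space of a planar domain is complete, but giving the short self-contained mean-value argument keeps the proof elementary.)
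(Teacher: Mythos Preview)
The paper does not actually supply a proof of this proposition; it merely recalls it as a known fact from complex analysis and refers to \cite[Theorem 4.4.10]{caruso-thesis-2019}. Your argument is the standard completeness proof for the Bergman space $A^2(\mathcal{U})=H(\mathcal{U})\cap L^2(\mathcal{U})$ and is correct: the area mean-value identity yields the pointwise estimate $|\varphi(z_0)|\leqslant(\sqrt{\pi}\,r)^{-1}\|\varphi\|_{L^2(D(z_0,r))}$, which upgrades $L^2$-Cauchyness to locally uniform Cauchyness; the Weierstrass convergence theorem then gives a holomorphic limit $\widetilde h$, and an exhaustion of $\mathcal{U}$ by compacta identifies $\widetilde h$ with the $L^2$-limit $h$ almost everywhere.

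One small simplification: you do not need to cover $K$ by finitely many discs. Taking a single radius $0<r<\mathrm{dist}(K,\partial\mathcal{U})$ (or any $r>0$ when $\mathcal{U}=\mathbb{C}$) already gives, for every $z\in K$,
\[
 |h_n(z)-h_m(z)|\;\leqslant\;\frac{1}{\sqrt{\pi}\,r}\,\|h_n-h_m\|_{L^2(D(z,r))}\;\leqslant\;\frac{1}{\sqrt{\pi}\,r}\,\|h_n-h_m\|_{L^2(\mathcal{U})}\,,
\]
so the sup over $K$ is controlled directly. In short, your proof fills in precisely what the paper leaves to the cited reference, by the expected route.
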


 \begin{example}
  Consider the multiplication operator $M_z:L^2(\Omega)\to L^2(\Omega)$, $M_zf= zf$, with $\Omega=\{z\in\mathbb{C}\,|\,|z-2|<1\}$, and let $g\in L^2(\Omega)$ be such that $\varepsilon\leqslant |g(z)|\leqslant \varepsilon'$ $\forall z\in\Omega$, for given $\varepsilon,\varepsilon'>0$. Then:
  \begin{itemize}
   \item[(i)] $M_z$ is bounded, injective, and normal;
   \item[(ii)] the inverse linear problem $M_zf=g$ is Krylov-solvable: $f\in\overline{K(M_z,g)}$;
   \item[(iii)] however, $M_z$ is not Krylov-reduced.
  \end{itemize}
 Parts (i) and (ii) were discussed in Example \ref{example:Krylov_sol}(ii). It was also observed therein that  $\mathcal{K}(M_z,g)=\{p\,g\,|\,p\in\mathbb{P}_\Omega[z]\}$, where $\mathbb{P}_\Omega[z]$ denotes the polynomials in $z\in\Omega$ with complex coefficients. Let us show that
 \[\tag{*}
  \overline{\mathcal{K}(M_z,g)}\;=\;\Big\{\phi g\,\Big|\,\phi\in\overline{\mathbb{P}_\Omega[z]\,}^{\|\,\|_2}\Big\}\,.
 \]
 Indeed, if $w\in\overline{\mathcal{K}(M_z,g)}$, then $w\xleftarrow[]{\|\,\|_2} p_n g$ for a sequence $(p_n)_{n\in\mathbb{N}}$ in $\mathbb{P}_\Omega[z]$, and since 
 \[
  \|p_n-p_m\|_{L^2(\Omega)}\;\leqslant\;\varepsilon^{-1}\|gp_n-gp_m\|_{L^2(\Omega)}
 \]
 then $(p_n)_{n\in\mathbb{N}}$ is a Cauchy sequence in $L^2(\Omega)$ with $p_n\xrightarrow[]{\|\,\|_2}\phi$ for some $\phi\in\overline{\mathbb{P}_\Omega[z]\,}^{\|\,\|_2}$; whence $w=\phi g$. Conversely, if $w=\phi g$ for $\phi\in\overline{\mathbb{P}_\Omega[z]\,}^{\|\,\|_2}$, then $\phi\xleftarrow[]{\|\,\|_2} p_n$ for a sequence $(p_n)_{n\in\mathbb{N}}$ of approximants in $\mathbb{P}_\Omega[z]$ and 
 \[
  \|w-p_ng\|_{L^2(\Omega)}\;=\;\|\phi g-p_n g\|_{L^2(\Omega)}\;\leqslant\;\varepsilon'\|\phi-p_n\|_{L^2(\Omega)}\;\xrightarrow[]{n\to\infty}\;0
 \]
 shows that $w\in\overline{\mathcal{K}(M_z,g)}$. The identity (*) is therefore established. Now, if by contradiction $M_z$ was reduced with respect to the decomposition $L^2(\Omega)=\overline{\mathcal{K}(M_z,g)}\oplus \mathcal{K}(M_z,g)^\perp$, then $\overline{z}g=M_{\overline{z}}g=M_z^*g\in\overline{\mathcal{K}(M_z,g)}$ (Prop.~\ref{prop:normops_Kryred_Astarg}), and identity (*) would imply that the function $\Omega\ni z\mapsto\overline{z}$ belongs to $\overline{\mathbb{P}_\Omega[z]\,}^{\|\,\|_2}$; however, the latter space, owing to Prop.~\ref{prop:closingholo}, is formed by holomorphic functions, and the function $z\mapsto\overline{z}$ clearly is not. Part (iii) is thus proved. 
 \end{example}

 \subsection{More on Krylov solutions in the lack of well-posedness}~

 Let us consider more generally solvable inverse problem ($g\in\mathrm{ran}A$) which are not necessarily well-posed (i.e., $A$ is possibly non-injective). First, we see that Krylov reducibility still guarantees the existence of Krylov solutions, indeed Prop.~\ref{prop:Kry-reduc-implies-Kry-solv} has a counterpart valid also in the lack of injectivity, which reads as follows.

 \begin{proposition}\label{prop:Kry-reduc-implies-Kry-solv-2}
 Let $A$ be a bounded operator on a Hilbert space $\cH$, and let $g\in\mathrm{ran}A$. If $A$ is $\mathcal{K}(A,g)$-reduced, then there exists a Krylov solution to the problem $Af=g$. For example, if $f_\circ\in\cH$ satisfies $Af_\circ=g$ and $P_{\mathcal{K}}$ is the orthogonal projection onto $\overline{\mathcal{K}(A,g)}$, then $f:=P_{\mathcal{K}}f_\circ$ is a Krylov solution.
\end{proposition}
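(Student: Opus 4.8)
The plan is to verify directly that the vector $f:=P_{\mathcal{K}}f_\circ$ is both a solution to $Af=g$ and an element of $\overline{\mathcal{K}(A,g)}$; membership in the Krylov subspace is immediate from the definition of $P_{\mathcal{K}}$, so the only real content is the claim $Af=g$. The key mechanism is exactly the one isolated in the proof of Proposition~\ref{prop:Kry-reduc-implies-Kry-solv}, now deployed without the luxury of injectivity. Write $f_\circ = P_{\mathcal{K}}f_\circ + (\mathbbm{1}-P_{\mathcal{K}})f_\circ = f + (\mathbbm{1}-P_{\mathcal{K}})f_\circ$, so that
\[
 g\;=\;Af_\circ\;=\;Af\,+\,A(\mathbbm{1}-P_{\mathcal{K}})f_\circ\,.
\]
The first step is to observe that $Af=AP_{\mathcal{K}}f_\circ\in\overline{\mathcal{K}(A,g)}$ by the invariance \eqref{eq:invariances2}, and that $g\in\overline{\mathcal{K}(A,g)}$ trivially; hence $A(\mathbbm{1}-P_{\mathcal{K}})f_\circ = g - Af \in \overline{\mathcal{K}(A,g)}$.

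The second step invokes Krylov reducibility: since $(\mathbbm{1}-P_{\mathcal{K}})f_\circ\in\mathcal{K}(A,g)^\perp$ and $A$ maps $\mathcal{K}(A,g)^\perp$ into itself (this is precisely what $\mathcal{K}(A,g)$-reducedness of $A$ asserts), we get $A(\mathbbm{1}-P_{\mathcal{K}})f_\circ\in\mathcal{K}(A,g)^\perp$. Combining the two steps, $A(\mathbbm{1}-P_{\mathcal{K}})f_\circ$ lies in $\overline{\mathcal{K}(A,g)}\cap\mathcal{K}(A,g)^\perp=\{0\}$, so $A(\mathbbm{1}-P_{\mathcal{K}})f_\circ=0$, and therefore $Af = g - A(\mathbbm{1}-P_{\mathcal{K}})f_\circ = g$. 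Since also $f\in\operatorname{ran}P_{\mathcal{K}}=\overline{\mathcal{K}(A,g)}$ by construction, $f$ is a Krylov solution, which proves the ``for example'' clause and in particular the existence assertion.

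There is essentially no obstacle here beyond bookkeeping: the whole argument is the injectivity-free residue of Proposition~\ref{prop:Kry-reduc-implies-Kry-solv}, and the only place injectivity was used there — to pass from $A(\mathbbm{1}-P_{\mathcal{K}})f=0$ to $f=P_{\mathcal{K}}f$ — is exactly what we now drop, compensating by redefining the candidate solution as $P_{\mathcal{K}}f_\circ$ rather than hoping the original $f_\circ$ already lies in the Krylov subspace. One should perhaps remark that the construction depends a priori on the choice of $f_\circ$ (when $A$ is not injective there are many), but the statement only claims that \emph{some} Krylov solution exists, so no uniqueness discussion is needed; it is worth noting, though, that if $f_\circ$ and $f_\circ'$ are two solutions then $f_\circ-f_\circ'\in\ker A$, and a short additional argument using Krylov reducibility shows $P_{\mathcal{K}}(f_\circ-f_\circ')\in\overline{\mathcal{K}(A,g)}\cap\ker A$, so the produced Krylov solution is unique modulo $\overline{\mathcal{K}(A,g)}\cap\ker A$ — a point that can be left as a remark rather than folded into the proof.
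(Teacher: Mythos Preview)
Your proof is correct and follows essentially the same route as the paper: you spell out in full the argument that the paper compresses into ``owing to the very same argument as in the above proof of Prop.~\ref{prop:Kry-reduc-implies-Kry-solv}'', namely showing $A(\mathbbm{1}-P_{\mathcal{K}})f_\circ\in\overline{\mathcal{K}(A,g)}\cap\mathcal{K}(A,g)^\perp=\{0\}$ and then concluding $AP_{\mathcal{K}}f_\circ=g$. Your closing remark on uniqueness modulo $\overline{\mathcal{K}(A,g)}\cap\ker A$ is extra commentary not present in the paper and not required for the statement.
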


\begin{proof}
 One has $A(\mathbbm{1}-P_\mathcal{K})f_\circ=0$, owing to the very same argument as in the above proof of Prop.~\ref{prop:Kry-reduc-implies-Kry-solv}. Thus, $AP_\mathcal{K}f_\circ=g$, that is, $f:=P_{\mathcal{K}}f_\circ$ is a Krylov solution.
\end{proof}
 
 Generic bounded linear inverse problems may or may not admit a Krylov solution, and when they do there may exist further non-Krylov solutions (Example \ref{example:Krylov_sol}). For a fairly general class of such problems, however, the Krylov solution, when it exists, is \emph{unique}.

 \begin{proposition}\label{prop:UniqueKrySol_normalOps}
   Let $A$ be a bounded normal operator on a Hilbert space $\cH$, and let $Af=g$ be the associated linear inverse problem, given $g\in\mathrm{ran}A$. Then there exists at most one solution $f\in\overline{\mathcal{K}(A,g)}$. More generally, the same conclusion holds if $A$ is bounded with $\ker A\subset\ker A^*$. 
 \end{proposition}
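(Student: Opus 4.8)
The plan is to show that any two solutions in $\overline{\mathcal{K}(A,g)}$ coincide by exploiting the self-adjoint invariance $A^*\mathcal{K}(A,g)^\perp\subset\mathcal{K}(A,g)^\perp$ from \eqref{eq:invariances}, together with the kernel inclusion $\ker A\subset\ker A^*$ (which is automatic for normal $A$, since normality gives $\|Af\|=\|A^*f\|$ for all $f$, hence $\ker A=\ker A^*$). Suppose $f_1,f_2\in\overline{\mathcal{K}(A,g)}$ both solve $Af=g$; then $h:=f_1-f_2\in\overline{\mathcal{K}(A,g)}$ and $Ah=0$, so $h\in\ker A\subset\ker A^*$. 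The goal is to deduce $h=0$.

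The key step is to observe that $h\in\ker A^*$ forces $h$ to be orthogonal to the entire Krylov subspace. Indeed, for every $k\in\mathbb{N}_0$ one has
\[
 \langle h,A^k g\rangle\;=\;\langle (A^*)^k h,g\rangle\;=\;0
\]
because $A^*h=0$ implies $(A^*)^k h=0$ for all $k\geqslant 1$, and for $k=0$ we use $\langle h,g\rangle=0$; but wait — the $k=0$ term needs separate justification. Here is the resolution: since $g\in\mathrm{ran}A$, write $g=Af_\circ$ for some $f_\circ$; then $\langle h,g\rangle=\langle h,Af_\circ\rangle=\langle A^*h,f_\circ\rangle=0$. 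More directly, $g=Af_1\in A\,\overline{\mathcal{K}(A,g)}$ with $f_1\in\overline{\mathcal{K}(A,g)}$, and $\langle h,g\rangle=\langle h,Af_1\rangle=\langle A^*h,f_1\rangle=0$. Hence $\langle h,A^kg\rangle=0$ for all $k\in\mathbb{N}_0$, so $h\perp\mathcal{K}(A,g)$, i.e. $h\in\mathcal{K}(A,g)^\perp$. Combined with $h\in\overline{\mathcal{K}(A,g)}$, the orthogonal decomposition \eqref{eq:Krylov_decomposition} yields $h=0$, so $f_1=f_2$.

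I would present the argument directly in the general form (assuming $\ker A\subset\ker A^*$), and then remark that normal operators satisfy this hypothesis since $\ker A=\ker A^*$. The only point requiring a little care — the "main obstacle," though it is minor — is the treatment of the $k=0$ term $\langle h,g\rangle$: one must not simply cite $(A^*)^kh=0$ (which only handles $k\geqslant 1$) but instead use that $g$ lies in the range of $A$ applied to a Krylov vector, so that $\langle h,g\rangle=\langle A^*h,\cdot\rangle=0$. With that handled, the rest is the immediate orthogonality computation and the appeal to the Krylov decomposition.
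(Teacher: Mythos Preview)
Your argument is correct and shares the paper's core idea: set $h=f_1-f_2\in\ker A\cap\overline{\mathcal{K}(A,g)}$, pass to $h\in\ker A^*$, and derive $h=0$. The paper's version is slightly more streamlined: rather than verifying $h\perp A^kg$ term by term (and handling $k=0$ separately), it simply notes that $\overline{\mathcal{K}(A,g)}\subset\overline{\mathrm{ran}\,A}$ and invokes the standard identity $\ker A^*=(\mathrm{ran}\,A)^\perp$, so $h\in\ker A^*\cap\overline{\mathrm{ran}\,A}=\{0\}$ immediately---this absorbs your ``$k=0$ obstacle'' into the single observation that $g\in\mathrm{ran}\,A$ forces every $A^kg$ (including $k=0$) to lie in $\mathrm{ran}\,A$.
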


 \begin{proof}
  If $f_1,f_2\in\overline{\mathcal{K}(A,g)}$ and $Af_1=g=Af_2$, then $f_1-f_2\in\ker A\cap \overline{\mathcal{K}(A,g)}$. By normality, $\ker A=\ker A^*$, and  moreover obviously $\overline{\mathcal{K}(A,g)}\subset\overline{\mathrm{ran}A}$. Therefore, $f_1-f_2\in\ker A^*\cap \overline{\mathrm{ran}A}$. But $\ker A^*\cap \overline{\mathrm{ran}A}=\{0\}$, whence $f_1=f_2$. The second statement is then obvious.
 \end{proof}
This proposition is similar to comments made in \cite{Freund-Hochbruck-1994,Brown-Walker-1997,Gasparo-Papini-Pasquali-2008} about Krylov solutions to singular systems in finite dimensions.

 Propositions \ref{prop:Kry-reduc-implies-Kry-solv-2} and \ref{prop:UniqueKrySol_normalOps} above have a noticeable consequence.

\begin{corollary}\label{cor:self-adj_Kry}
 If $A\in\mathcal{B}(\cH)$ is self-adjoint, then the inverse problem $Af=g$ with $g\in\mathrm{ran}A$ admits a unique Krylov solution.
\end{corollary}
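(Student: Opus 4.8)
The plan is to derive Corollary~\ref{cor:self-adj_Kry} by simply combining the two preceding propositions, observing that a bounded self-adjoint operator meets the hypotheses of both. First I would recall that if $A\in\mathcal{B}(\cH)$ is self-adjoint then in particular $A$ is normal, and moreover $A$ is $\mathcal{K}(A,g)$-reduced for every $g\in\cH$: this is precisely the content of Example~\ref{example:Kry-red}(i), which follows directly from the invariances \eqref{eq:invariances} (since $A^*=A$, the second invariance reads $A\,\mathcal{K}(A,g)^\perp\subset\mathcal{K}(A,g)^\perp$, which together with the always-valid first invariance \eqref{eq:invariances2} gives Krylov reducibility).

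Next, I would invoke Proposition~\ref{prop:Kry-reduc-implies-Kry-solv-2}: since $A$ is $\mathcal{K}(A,g)$-reduced and $g\in\mathrm{ran}A$, there exists at least one Krylov solution to $Af=g$ — concretely, $f=P_\mathcal{K}f_\circ$ for any $f_\circ$ with $Af_\circ=g$. Then I would invoke Proposition~\ref{prop:UniqueKrySol_normalOps}: since $A$ is normal (being self-adjoint), there is at most one solution lying in $\overline{\mathcal{K}(A,g)}$. Combining existence and uniqueness yields that the Krylov solution exists and is unique, which is exactly the assertion.

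There is essentially no obstacle here — the statement is a corollary in the genuine sense, and the proof is a two-line citation of Propositions~\ref{prop:Kry-reduc-implies-Kry-solv-2} and~\ref{prop:UniqueKrySol_normalOps} together with the observation that self-adjoint operators are simultaneously normal and Krylov-reduced. The only point worth stating explicitly, so the reader is not left to reconstruct it, is that self-adjointness implies Krylov reducibility via \eqref{eq:invariances}; everything else is a direct appeal to the quoted results. Thus the write-up is:

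\begin{proof}
 Since $A$ is self-adjoint it is in particular normal, and, by \eqref{eq:invariances}, $\overline{\mathcal{K}(A,g)}$ and $\mathcal{K}(A,g)^\perp$ are both invariant under $A=A^*$, i.e.\ $A$ is $\mathcal{K}(A,g)$-reduced (Example \ref{example:Kry-red}(i)). By Proposition \ref{prop:Kry-reduc-implies-Kry-solv-2} the inverse problem $Af=g$ therefore admits a Krylov solution, and by Proposition \ref{prop:UniqueKrySol_normalOps} (normality of $A$) it admits at most one. Hence the Krylov solution exists and is unique.
\end{proof}
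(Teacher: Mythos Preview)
Your proposal is correct and matches the paper's own proof essentially verbatim: both argue that self-adjointness gives Krylov reducibility (Example~\ref{example:Kry-red}(i)), then cite Proposition~\ref{prop:Kry-reduc-implies-Kry-solv-2} for existence and Proposition~\ref{prop:UniqueKrySol_normalOps} for uniqueness. Your write-up is slightly more explicit in spelling out the normality and the invariance reasoning, but the logical route is identical.
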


\begin{proof}
 $A$ is $\mathcal{K}(A,g)$-reduced (Example \eqref{example:Kry-red}(i)), hence the induced inverse problem admits a Krylov solution (Prop.~\ref{prop:Kry-reduc-implies-Kry-solv-2}). Such a solution is then necessarily unique (Prop.~\ref{prop:UniqueKrySol_normalOps}).
\end{proof}

It is worth noticing that the self-adjoint case has always deserved a special status in this context, theoretically and in applications: the convergence of Krylov techniques for self-adjoint operators are the object of an ample literature -- see, e.g., \cite{Karush-1952,Daniel-1967,Kammerer-Nashed-1972,Winther-1980,Nemirovskiy-Polyak-1985,Herzog-Ekkehard-2015,Novati-2018-KryTikh}.

\begin{example}\label{example:HansenRegTools}
The test problems
\[
 \begin{array}{lllll}
  \textsf{blur} & \textsf{deriv2} & \textsf{foxgood} & \textsf{gravity} & \textsf{heat} \\
    \textsf{i\_laplace} & \textsf{parallax} & \textsf{phillips} & \textsf{shaw} & \textsf{ursell}
 \end{array}
\]
of Hansen's REGULARIZATION TOOLS Matlab package \cite{Hansen-RegToolbox_v4-1} correspond to integral operators $A_K$ on some $L^2[a,b]$ whose integral kernels $K(x,y)$ are square-integrable and have the property $K(x,y)=\overline{K(x,y)}$, namely they are Hilbert-Schmidt and self-adjoint operators. Owing to Corollary \ref{cor:self-adj_Kry}, all such inverse problems admit a unique Krylov solution (in fact they are Krylov-solvable, as long as $g\in\mathrm{ran}A_K$ and $A_K$ is injective).
\end{example}

\begin{example}\label{example:PRdiffusion} 
 The {\tt PRdiffusion} two-dimensional test problem of Gazzola-Hansen-Nagy's IR Tools \cite{Gazzola-Hansen-Nagy-2018} consists of reconstructing, from the heat diffusion problem
 \[
  \begin{cases}
   \;\;\,\displaystyle\frac{\partial u}{\partial t}=\Delta_N u \\
   \:u(0) = u_0
  \end{cases}
 \]
 with unknown $u\equiv u(x,y;t)$ in the Hilbert space $L^1([0,1]\times[0,1])$ and with Neumann Laplacian $\Delta_N$, the initial datum $u_0$ starting from the knowledge of the function $u(t)$ at time $t>0$. By standard functional-analytic arguments one has $u_0=e^{-t\Delta_N}u(t)$, that is, for given $t$ the inverse problem $u(t)\mapsto u_0$ is self-adjoint and hence (Corollary \ref{cor:self-adj_Kry}) it admits a unique Krylov solution.
\end{example}

\begin{example}\label{example:nonselfadj}
For given $k\in L^2[0,1]$, it is a standard fact that the operator
\begin{equation}
 \begin{split}
  & A:L^2[0,1]\to L^2[0,1] \\
  &  (A u)(x)\;:=\int_0^1 k(x-y) u(y)\,\ud y
 \end{split}
\end{equation}
is a Hilbert-Schmidt normal operator, with norm $\|A\|_{\mathrm{op}}\leqslant\|k\|_{L^2}$ (the integral kernel $\kappa_{A^*A}$ of $A^*A$ being the function $\kappa_{A^*A}(x,y)=\int_0^1\overline{k(y-z)}\,k(x-z)\,\ud z$). $A$ is self-adjoint if and only if $k(x)=k(-x)$ for almost every $x\in[0,1]$: when the latter condition is not matched, Corollary \ref{cor:self-adj_Kry} is not applicable any longer. This is the case if we consider, for concreteness, the function
\begin{equation}
 k(x)\;:=\;\frac{e\,\sin\pi x\,}{\,(1+e)\pi\,e^x}-\frac{1}{\,1+\pi^2\,}\,.
\end{equation}
With the above choice, in order investigate the Krylov-solvability of the inverse problem $Af=g$ for given $g\in\mathrm{ran}\,A$ one must then go through an ad hoc analysis. Let us introduce the orthonormal basis $\{\varphi_n\,|\,n\in\mathbb{Z}\}$ of $L^2[0,1]$, where
\begin{equation}
 \varphi_n(x)\;=\; e^{2\pi\ii n x}\,.
\end{equation}
Then 
\begin{equation}
 k\;=\;\sum_{n\in\mathbb{Z}}c_n\,\varphi_n\,,\qquad c_n\,:=\,\langle \varphi_n,k\rangle_{L^2}\,,
\end{equation}
and a straightforward explicit computation yields
\begin{equation}
 c_n\;=\;
 \begin{cases}
  \;0 & \textrm{if }\,n=0 \\
  \displaystyle\frac{1}{\,1+4\ii n\pi+(1-4n^2)\pi^2\,} & \textrm{if }\,n\in\mathbb{Z}\setminus\{0\}\,.
 \end{cases}
\end{equation}
As a consequence,
\begin{equation}
 \begin{split}
  (A u)(x)\;&=\int_0^1 k(x-y) u(y)\,\ud y\;=\;\sum_{n\in\mathbb{Z}}c_n\int_0^1 \varphi_n(x-y) u(y)\,\ud y \\
  &=\sum_{n\in\mathbb{Z}}c_n\,\varphi_n(x)\int_0^1 \overline{\varphi_n(y)}\, u(y)\,\ud y\,,
 \end{split}
\end{equation}
that is,
\begin{equation}\label{eq:Aexpanded}
 A\;=\;\sum_{n\in\mathbb{Z}}c_n|\varphi_n\rangle\langle \varphi_n|\;=\;\sum_{n\in\mathbb{Z}}\lambda_n|\psi_n\rangle\langle \varphi_n|\,,\qquad 
 \begin{cases}
  \lambda_n\,:=\,|c_n|\in\,\mathbb{R} \\
  \psi_n\,:=\,e^{\ii\,\mathrm{arg}(c_n)}\varphi_n\,.
 \end{cases}
\end{equation}
It is standard to see that $\{\psi_n\,|\,n\in\mathbb{Z}\}$ is just another orthonormal basis of $L^2[0,1]$ and that the convergence in \eqref{eq:Aexpanded} holds in the operator norm. Thus, the second equality in \eqref{eq:Aexpanded} gives the usual singular value decomposition of $A$. We can now draw a number of conclusions.
\begin{itemize}
 \item $A$ is not injective: $\ker A=\mathrm{span}\{\varphi_0\}$.
 \item $\mathrm{ran}A=\mathrm{span}\{\psi_n\,|\,n\in\mathbb{Z}\setminus\{0\}\}=\mathrm{span}\{\varphi_n\,|\,n\in\mathbb{Z}\setminus\{0\}\}$.
 \item If $g\in\mathrm{ran}A$ (that is, if $g$ is not a constant function), and $J\subset\mathbb{Z}\setminus\{0\}$ is the subset of non-zero integers $n$ such that $g_n:=\langle \psi_n,g\rangle_{L^2}\neq 0$, then
 \[
  g\;=\;\sum_{n\in J}\;g_n \,\psi_n
 \]
 and the inverse linear problem $Af=g$ admits an infinity of solutions of the form $f=\alpha \varphi_0+f_K$ for arbitrary $\alpha\in\mathbb{C}$, where 
 \[
  f_K\;:=\;\sum_{n\in J}\;\frac{g_n}{\lambda_n} \,\varphi_n
 \]
  (recall that $\lambda_n\neq 0$ whenever $n\neq 0$).
 \item Moreover, due to the property $\psi_n=e^{\ii\,\mathrm{arg}(c_n)}\varphi_n$ the vectors $g,Ag,A^2g,\dots$ have non-zero components only of order $n\in J$; this, together with the fact that the $\lambda_n$'s are all distinct, implies that
 \[
  K(A,g)\;=\;\mathrm{span}\{\psi_n\,|\,n\in J\}=\mathrm{span}\{\varphi_n\,|\,n\in J\}\,.
 \]
 \item The functions $\alpha\varphi_0$ with $\alpha\in\mathbb{R}\setminus\{0\}$ are non-Krylov solutions to the problem $Af=g$, whereas $f_K$ is the unique Krylov solution, consistently with Prop.~\ref{prop:UniqueKrySol_normalOps}.
\end{itemize}
\end{example}

 \subsection{Special classes of Krylov-solvable problems}~

 In the current lack (to our knowledge) of a complete characterisation of all Krylov-solvable inverse problems on infinite-dimensional Hilbert space, it is of interest to identify special sub-classes of them.

 We already examined simple explicit cases in Example \ref{example:Krylov_sol}, parts (i), (ii), (iii), and (vii).

 We also concluded (Cor.~\ref{cor:self-adj_Kry}) that a whole class of paramount relevance, the bounded self-adjoint operators, induce inverse problems that are Krylov-solvable, and Examples \ref{example:HansenRegTools} and \ref{example:PRdiffusion} survey a number of applications.

 It is worth mentioning that in the special case where the operator $A$ is  bounded, self-adjoint, and \emph{positive definite}, an alternative analysis by Nemirovskiy and Polyak \cite{Nemirovskiy-Polyak-1985} (for a more recent discussion of which we refer to \cite[Sect.~7.2]{Engl-Hanke-Neubauer-1996} and \cite[Sect.~3.2]{Hanke-ConjGrad-1995}, as well as \cite{CM-Nemi-unbdd-2019}) proved that the corresponding linear inverse problem $Af=g$ with $g\in\ran A$ is actually Krylov-solvable. In particular it was proved that the sequence of Krylov approximations from the conjugate gradient algorithm converges strongly to the exact solution.

 Krylov-solvable problems can be surely found for suitable \emph{non}-self-adjoint operators too (Example \ref{example:nonselfadj}), although, as already commented, Krylov-solvability is not automatic for compact, normal, injective operators (Example \ref{example:Krylov_sol}(v)).

 To conclude this Section, let us present one further class of well-posed inverse linear problems that are Krylov-solvable (Corollary \ref{cor:KclassSolvable} below). For shortness, we shall say that an operator $A$ is of \emph{class}-$\mathscr{K}$ when
 \begin{itemize}
  \item $A\in\mathcal{B}(\cH)$,
  \item $0\notin\sigma(A)$,
  \item there exists an open subset $\mathcal{W}\subset\mathbb{C}$ such that $\sigma(A)\subset\mathcal{W}$, $\overline{\mathcal{W}}$ is compact with $0\notin\overline{\mathcal{W}}$, and $\mathbb{C}\setminus\overline{\mathcal{W}}$ is connected in $\mathbb{C}$.
 \end{itemize}
 (Observe, for instance, that the multiplication operator $M_z$ considered in Example \ref{example:Krylov_sol}(ii) is of class-$\mathscr{K}$, whereas unitary operators are not.)

  Class-$\mathscr{K}$ operators have a polynomial approximation of their inverse, which eventually yields Krylov-solvability of the associated inverse problem.

 \begin{proposition}\label{prop:inverseclassk}
  Let $A$ be an operator of class-$\mathscr{K}$ on a Hilbert space $\cH$. Then there exists a polynomial sequence $(p_n)_{n\in\mathbb{N}}$ over $\mathbb{C}$ such that $\|p_n(A)-A^{-1}\|_{\mathrm{op}}\to 0$ as $n\to\infty$.  
 \end{proposition}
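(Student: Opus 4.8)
The plan is to invoke Runge's theorem on polynomial approximation of holomorphic functions, applied to the function $z\mapsto z^{-1}$, and then to transfer the uniform approximation on a spectral neighbourhood to an operator-norm approximation via the holomorphic (Riesz–Dunford) functional calculus. The hypotheses of class-$\mathscr{K}$ are tailored precisely so that Runge's theorem applies: $\overline{\mathcal{W}}$ is compact and does not contain $0$, so $z\mapsto z^{-1}$ is holomorphic on an open neighbourhood of $\overline{\mathcal{W}}$; and $\mathbb{C}\setminus\overline{\mathcal{W}}$ is connected, which is exactly the topological condition ensuring that holomorphic functions on a neighbourhood of $\overline{\mathcal{W}}$ can be approximated \emph{uniformly on $\overline{\mathcal{W}}$} by \emph{polynomials} (rather than merely by rational functions with poles off $\overline{\mathcal{W}}$).

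First I would fix a bounded open set $\mathcal{U}$ with $\overline{\mathcal{W}}\subset\mathcal{U}$, $0\notin\overline{\mathcal{U}}$, and $\overline{\mathcal{U}}$ compact; then $h(z):=z^{-1}$ is holomorphic on $\mathcal{U}\supset\overline{\mathcal{W}}$. By Runge's theorem (polynomial version, using that $\mathbb{C}\setminus\overline{\mathcal{W}}$ is connected) there is a sequence $(p_n)_{n\in\mathbb{N}}$ of polynomials with
\[
 \sup_{z\in\overline{\mathcal{W}}}\,|p_n(z)-z^{-1}|\;\xrightarrow[n\to\infty]{}\;0\,.
\]
Next I would recall that since $\sigma(A)\subset\mathcal{W}\subset\overline{\mathcal{W}}$, the Riesz–Dunford functional calculus assigns to any function $\varphi$ holomorphic on a neighbourhood of $\sigma(A)$ the operator $\varphi(A)=\frac{1}{2\pi\ii}\oint_\Gamma \varphi(\zeta)(\zeta\mathbbm{1}-A)^{-1}\,\ud\zeta$, where $\Gamma$ is a cycle in $\mathcal{W}\setminus\sigma(A)$ (for instance, the boundary of a smooth open set $\mathcal{V}$ with $\sigma(A)\subset\mathcal{V}$, $\overline{\mathcal{V}}\subset\mathcal{W}$) winding once around $\sigma(A)$; this calculus is multiplicative, sends the polynomial $p_n$ to the operator $p_n(A)$ obtained by substitution, and sends $h(z)=z^{-1}$ to $A^{-1}$ (legitimate because $0\notin\sigma(A)$). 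Applying it to $\varphi=p_n-h$ and estimating the contour integral gives
\[
 \|p_n(A)-A^{-1}\|_{\mathrm{op}}\;=\;\Big\|\frac{1}{2\pi\ii}\oint_\Gamma\big(p_n(\zeta)-\zeta^{-1}\big)(\zeta\mathbbm{1}-A)^{-1}\,\ud\zeta\Big\|_{\mathrm{op}}\;\leqslant\;\frac{\mathrm{length}(\Gamma)}{2\pi}\;\Big(\sup_{\zeta\in\Gamma}\|(\zeta\mathbbm{1}-A)^{-1}\|_{\mathrm{op}}\Big)\;\sup_{\zeta\in\Gamma}\big|p_n(\zeta)-\zeta^{-1}\big|\,.
\]
Since $\Gamma\subset\overline{\mathcal{W}}$ the last supremum tends to $0$ by the Runge estimate, while the resolvent is continuous and hence bounded on the compact set $\Gamma$; thus the right-hand side vanishes as $n\to\infty$, which is the claim.

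The only genuinely delicate point is the application of Runge's theorem in its \emph{polynomial} form: one must check that the connectedness of $\mathbb{C}\setminus\overline{\mathcal{W}}$ (equivalently, that $\overline{\mathcal{W}}$ has connected complement, so "no holes") is what upgrades rational approximation to polynomial approximation — a standard but essential use of the hypothesis. A minor technical care is also needed in choosing the intermediate open sets so that $h$ is holomorphic on a neighbourhood of the chosen contour $\Gamma$ and $\Gamma\subset\overline{\mathcal{W}}$ simultaneously; this is arranged simply by taking $\mathcal{V}$ with $\sigma(A)\subset\mathcal{V}$, $\overline{\mathcal{V}}\subset\mathcal{W}$, $\partial\mathcal{V}$ a finite union of smooth curves, so that $\Gamma=\partial\mathcal{V}\subset\mathcal{W}\subset\overline{\mathcal{W}}$ and the Runge estimate applies on $\Gamma$. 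Everything else — multiplicativity of the functional calculus, the identity $p_n(A)$ under substitution, the standard contour estimate — is routine.
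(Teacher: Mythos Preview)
Your proposal is correct and follows essentially the same route as the paper: Runge's theorem (polynomial version, using the connectedness of $\mathbb{C}\setminus\overline{\mathcal{W}}$) to approximate $z\mapsto z^{-1}$ uniformly on $\overline{\mathcal{W}}$, followed by the Riesz--Dunford holomorphic functional calculus and a contour estimate. The only cosmetic difference is in the final bound: the paper exploits the identity $\frac{1}{2\pi\ii}\int_\Gamma(\zeta\mathbbm{1}-A)^{-1}\,\ud\zeta=\mathbbm{1}$ to obtain the cleaner estimate $\|A^{-1}-p_n(A)\|_{\mathrm{op}}\leqslant\|z^{-1}-p_n\|_{L^\infty(\overline{\mathcal{W}})}$, whereas you use the standard length--times--sup-of-resolvent bound; both reach the same conclusion.
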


 \begin{proof}
  Let $\mathcal{U}\subset\mathbb{C}$ be an open set such that $0\notin\mathcal{U}$ and $\overline{\mathcal{W}}\subset\mathcal{U}$, where $\mathcal{W}$ is an open set fulfilling the definition of class-$\mathscr{K}$ for the given $A$. The function $z\mapsto z^{-1}$ is holomorphic on $\mathcal{U}$ and hence (see, e.g., \cite[Theorem 13.7]{Rudin-realcomplexanalysis}) there exists a polynomial sequence $(p_n)_{n\in\mathbb{Z}}$ on $\mathcal{W}$ such that
  \[
   \|z^{-1}-p_n(z)\|_{L^\infty(\overline{\mathcal{W}})}\;\xrightarrow[]{n\to\infty}\;0\,.
  \]
  On the other hand, there exists a closed curve $\Gamma\subset\mathcal{W}\setminus\sigma(A)$ such that (see, e.g., \cite[Theorem 13.5]{Rudin-realcomplexanalysis}) 
  \[
    z^{-1}\;=\;\frac{1}{2\pi\ii}\int_{\Gamma}\frac{\ud\zeta}{\,\zeta(\zeta-z)}\,,\qquad p_n(z)\;=\;\frac{1}{2\pi\ii}\int_{\Gamma}\frac{p_n(\zeta)}{(\zeta-z)}\,\ud\zeta\,,
  \]
 whence also (see, e.g. \cite[Chapter XI, Sect.~151]{Riesz-Nagy_FA-1955_Eng})
 \[
    A^{-1}\;=\;\frac{1}{2\pi\ii}\int_{\Gamma}\zeta^{-1}(\zeta\mathbbm{1}-A)^{-1}\,\ud\zeta\,,\qquad p_n(A)\;=\;\frac{1}{2\pi\ii}\int_{\Gamma}p_n(\zeta)\,(\zeta\mathbbm{1}-A)^{-1}\,\ud\zeta\,.
  \]
  Thus,
  \[
   \begin{split}
    \|A^{-1}-p_n(A)\|_{\mathrm{op}}\;&=\;\Big\|\frac{1}{\,2\pi\ii\,}\int_{\Gamma}(\zeta^{-1}-p_n(\zeta))(\zeta\mathbbm{1}-A)^{-1}\,\ud\zeta \Big\|_{\mathrm{op}} \\
    &\leqslant\;\|z^{-1}-p_n(z)\|_{L^\infty(\overline{\mathcal{W}})}\;\Big\|\frac{1}{\,2\pi\ii\,}\int_{\Gamma}(\zeta\mathbbm{1}-A)^{-1}\,\ud\zeta \Big\|_{\mathrm{op}} \\
    &=\;\|z^{-1}-p_n(z)\|_{L^\infty(\overline{\mathcal{W}})}
   \end{split}
  \]
  (indeed, $(2\pi\ii)^{-1}\int_{\Gamma}(\zeta\mathbbm{1}-A)^{-1}\,\ud\zeta=\mathbbm{1}$), and the conclusion follows.
 \end{proof}

 \begin{corollary}\label{cor:KclassSolvable}
  Let $A$ be an operator of class-$\mathscr{K}$ on a Hilbert space $\cH$. Then the inverse problem $Af=g$ for given $g\in\cH$ is Krylov-solvable, i.e., the unique solution $f$ belongs to $\overline{\mathcal{K}(A,g)}$.  
 \end{corollary}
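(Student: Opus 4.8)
The plan is to read off the corollary as an immediate consequence of Proposition \ref{prop:inverseclassk}, once one observes that the approximants it produces are, by construction, Krylov vectors. Since a class-$\mathscr{K}$ operator satisfies $0\notin\sigma(A)$, it is a bounded bijection on $\cH$ with $A^{-1}\in\mathcal{B}(\cH)$, so for any $g\in\cH$ the equation $Af=g$ has the unique solution $f=A^{-1}g$; in particular the problem is trivially solvable and $g\in\ran A$.

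Next I would invoke Proposition \ref{prop:inverseclassk} to obtain a polynomial sequence $(p_n)_{n\in\mathbb{N}}$ over $\mathbb{C}$ with $\|p_n(A)-A^{-1}\|_{\mathrm{op}}\to 0$ as $n\to\infty$. For each $n$, the vector $p_n(A)g$ is a finite linear combination of $g,Ag,A^2g,\dots$, hence $p_n(A)g\in\mathcal{K}(A,g)$ by the definition \eqref{eq:defKrylov} of the Krylov subspace. The estimate
\[
 \|p_n(A)g-f\|_{\cH}\;=\;\|(p_n(A)-A^{-1})g\|_{\cH}\;\leqslant\;\|p_n(A)-A^{-1}\|_{\mathrm{op}}\,\|g\|_{\cH}\;\xrightarrow[]{\,n\to\infty\,}\;0
\]
then exhibits $f$ as the $\cH$-norm limit of a sequence in $\mathcal{K}(A,g)$, whence $f\in\overline{\mathcal{K}(A,g)}$, i.e.\ $f$ is a Krylov solution. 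This is the entire argument; alternatively one could phrase it via Proposition \ref{prop:KrylIntTriv_KrylSol}(ii) or Proposition \ref{prop:fKry_iff_AKdenseK}(ii), but the direct polynomial-approximation route is shortest and most transparent.

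There is essentially no obstacle at the level of the corollary itself: all the analytic work — the Runge-type uniform polynomial approximation of $z\mapsto z^{-1}$ on $\overline{\mathcal{W}}$, exploiting that $\mathbb{C}\setminus\overline{\mathcal{W}}$ is connected and $0\notin\overline{\mathcal{W}}$, together with the holomorphic (Riesz--Dunford) functional calculus bound on $\Gamma$ — has already been carried out in the proof of Proposition \ref{prop:inverseclassk}. The only point worth stating carefully is the passage from $p_n(A)\to A^{-1}$ in operator norm to $p_n(A)g\to f$ in $\cH$, which is the elementary inequality displayed above; one should also remark explicitly that membership $p_n(A)g\in\mathcal{K}(A,g)$ holds for \emph{any} polynomial $p_n$, so no control on the degree or coefficients of the $p_n$'s is needed.
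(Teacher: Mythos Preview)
Your proof is correct and follows essentially the same route as the paper's own argument: invoke Proposition~\ref{prop:inverseclassk} to get $\|p_n(A)-A^{-1}\|_{\mathrm{op}}\to 0$, observe that $p_n(A)g\in\mathcal{K}(A,g)$, and pass to the vector-norm limit to conclude $f\in\overline{\mathcal{K}(A,g)}$. Your write-up is simply more explicit about the uniqueness of $f=A^{-1}g$ and the intermediate inequality, but there is no difference in strategy.
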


 \begin{proof}
  As $\|p_n(A)-A^{-1}\|_{\mathrm{op}}\xrightarrow[]{n\to\infty} 0$ (Prop.~\ref{prop:inverseclassk}), then also $\|p_n(A)g-f\|_{\cH}=\|p_n(A)g-A^{-1}g\|_{\cH}\xrightarrow[]{n\to\infty} 0$, and obviously $p_N(A)g\in\mathcal{K}(A,g)$.  
 \end{proof}

\section{Numerical tests and examples}\label{sec:numerics}

In this final Section we examine the main features discussed theoretically so far through a series of numerical tests on inverse problems in infinite-dimensional Hilbert space, suitably truncated using the GMRES algorithm, and analysed by increasing the size of the truncation (i.e. the number of iterations of GMRES).

We focus on the behaviour of the truncated problems under these circumstances:
\begin{itemize}
 \item[I)] when the solution to the original problem is or is not a Krylov solution;
 \item[II)] when the linear operator is or is not injective (well-defined vs ill-defined problem).%;
%  \item[3.] when different choices are made of the truncation bases.
\end{itemize}

\subsection{Four inverse linear problems}~

As a `baseline' case, where the solution is known a priori to be a Krylov solution, we considered the compact, injective, self-adjoint multiplication operator on $\ell^2(\mathbb{N})$ (Sec.~\ref{sec:multiplication_op})
\begin{equation}
  M\;=\;\sum_{n=1}^\infty \sigma_n |e_{n}\rangle\langle e_n|\,,\qquad \sigma_n\,=\,(5n)^{-1}, \label{eq:diagonal}
\end{equation}
In comparison to $M$ we tested a non-injective version of it, namely
\begin{equation}\label{eq:diagonal-noninjective}
  \widetilde{M}\;=\;\sum_{n=1}^\infty \widetilde{\sigma}_n |e_{n}\rangle\langle e_n|\,,\qquad
  \widetilde{\sigma}_n\,=\,
  \begin{cases}
   0 & \textrm{ if } n\in\{3,6,9\} \\
   \sigma_n & \textrm{ otherwise},
  \end{cases}
\end{equation}
as well as the weighted right shift (Sec.~\ref{sec:compactRshift})
\begin{equation}
 \mathcal{R}\;=\;\sum_{n=1}^\infty \sigma_n |e_{n+1}\rangle\langle e_n|
\end{equation}
with the same weights as in \eqref{eq:diagonal}.
We thus investigated the inverse problems $Mf=g$, $\widetilde{M}f=g$, and $\mathcal{R}f=g$ with datum $g$ generated by the a priori chosen solution
\begin{equation}
 f\;=\;\sum_{n\in\mathbb{N}}f_n e_n\,,\qquad f_n\,=\,  
 	\begin{cases}
   n^{-1} & \textrm{ if } n\leqslant 250 \\
   \;0 & \textrm{ otherwise}\,.
  \end{cases}
\end{equation}
Let us observe that 
\begin{equation}\label{eq:digamma}
 \|f\|_{\ell^2}\;=\; \sqrt{{\textstyle\frac{\;\pi^2}{6}} - \Psi^{(1)}(251)}\;\simeq\; 1.28099\,,
\end{equation}
where $\Psi^{(k)}$ is the polygamma function of order $k$ \cite[Sec.~6.4]{Abramowitz-Stegun-1964}.

Fourth and last, we considered the inverse problem $Vf=g$ where $V$ is the Volterra operator in $L^2[0, 1]$ (Sect.~\ref{sec:Volterra}) and $g(x)=\frac{1}{2}x^2$. The problem has unique solution
 \begin{equation}\label{eq:solV}
  f(x)\;=\;x\,,\qquad \|f\|_{L^2[0,1]}\;=\;\frac{1}{\sqrt{3}}\;\simeq\;0.5774\,.
 \end{equation}

Depending on the context, we shall denote respectively by $\cH$ and by $A$ the Hilbert space ($\ell^2(\mathbb{N})$ or $L^2[0,1]$) and the operator ($M$, $\widetilde{M}$, $\mathcal{R}$, or $V$) under consideration.

The inverse problems in $\cH$ associated with $M$ and $\widetilde{M}$ are Krylov-solvable (Corollary \ref{cor:self-adj_Kry}), and so too is the inverse problem associated with $V$, with $\mathcal{K}(V,g)$ dense in $L^2[0,1]$ (Example \ref{example:Krylov_sol}(vii)).

Instead, the problem associated with $\mathcal{R}$ is not Krylov-solvable, for $\mathcal{K}(\mathcal{R},g)^\perp$ always contains the first canonical vector $e_1$.

For each operator $A$, we proceeded numerically by generating the spanning vectors $g,Ag,A^2g,\dots$ of $\mathcal{K}(A,g)$ up to order $N_{\mathrm{max}}=500$ if $A=M,\widetilde{M},\mathcal{R}$, and up to order $N_{\mathrm{max}}=175$ if $A=V$. Such values represent our practical choice of `infinite' dimension for $\mathcal{K}(A,g)$.

Analogously, when $A=M,\widetilde{M},\mathcal{R}$ we allocated for each of the considered vectors $f,g,Ag,A^2g,\dots$ an amount of 2500 entries with respect to the canonical basis of $\ell^2(\mathbb{N})$: such a value represents our practical choice of `infinite' dimension for $\cH$. Let us observe, in particular, that by repeated application of $\mathcal{R}$ up to 500 times, the vectors $\mathcal{R}^k g$ have non-trivial entries up to order 251+500=751 (by construction the last non-zero entries of $f$ and of $g$ are the components, respectively, $e_{250}$ and $e_{251}$), and by repeated application of $M$ and $\widetilde{M}$ the vectors $M^kg$ and $\widetilde{M}^kg$ have the component $e_{250}$ as last non-zero entry: all such limits stay well below our `infinity' threshold of 2500 for $\cH$.

From each collection $\{g,Ag,\dots,A^{N-1}g\}$ we then obtained an orthonormal basis of the $N$-dimensional truncation of $\mathcal{K}(A,g)$, $N\leqslant N_{\mathrm{max}}$, and we truncated the `infinite-dimensional' inverse problem $Af=g$ to a $N$-dimensional one, that we solved by means of the GMRES algorithm, in the same spirit of our general discussion \cite[Sect.~2]{CMN-truncation-2018}.

Denoting by  $\widehat{f^{(N)}}\in\cH$ the vector of the solution from the GMRES algorithm at the $N$-th iterate, 
we analysed two natural indicators of the convergence `as $N\to\infty$', the \emph{infinite-dim\-en\-sion\-al error} $\mathscr{E}_N$ and the \emph{infinite-dim\-en\-sion\-al residual} $\mathfrak{R}_N$, defined respectively \cite[Sect.~2]{CMN-truncation-2018} as 
\begin{equation}\label{eq:error-residual}
   \mathscr{E}_N\;:=\;f-\widehat{f^{(N)}}\,,\qquad
   \mathfrak{R}_N\;:=\;g-A\,\widehat{f^{(N)}}\,.
\end{equation}

\subsection{Krylov vs non-Krylov solutions}~

The (norm) behaviours of the infinite-dimensional error $\|\mathscr{E}_N\|_{\mathcal{H}}$, of the infinite-dimensional residual $\| \mathfrak{R}_N \|_{\mathcal{H}}$, and of the approximated solution $\Vert \widehat{f^{(N)}} \Vert_{\mathcal{H}}$ at the $N$-th step of the algorithm are illustrated in Figure~\ref{fig:error4cases} as a function of $N$.

The numerical evidence is the following.
%We underline the following evidence from Figure \ref{fig:error4cases}:
\begin{itemize}
 \item The error norm of the baseline case and the Volterra case tend to vanish with $N$, and so does the residual norm, consistently with the obvious property $\| \mathfrak{R}_N \|_{\mathcal{H}}\leqslant\|A\|_{\mathrm{op}}\|\mathscr{E}_N\|_{\mathcal{H}}$. Moreover, $\Vert \widehat{f^{(N)}} \Vert_{\mathcal{H}}$ stays uniformly bounded and attains asymptotically the theoretical value prescribed by \eqref{eq:digamma} or \eqref{eq:solV}.
 \item Instead, the error norm of the forward shift remains of order one indicating a lack of \emph{norm-convergence}, regardless of truncation size. Analogous lack of convergence is displayed in the norm of the finite-di\-men\-sional residual. Again, $\Vert \widehat{f^{(N)}} \Vert_{\mathcal{H}}$ remains uniformly bounded, but attains an asymptotic value that is strictly smaller than the theoretical value \eqref{eq:digamma}.
\end{itemize}

\begin{figure}[!t]
  \centering
  \begin{subfigure}[b]{\textwidth}
    \includegraphics[width = 0.32 \textwidth]{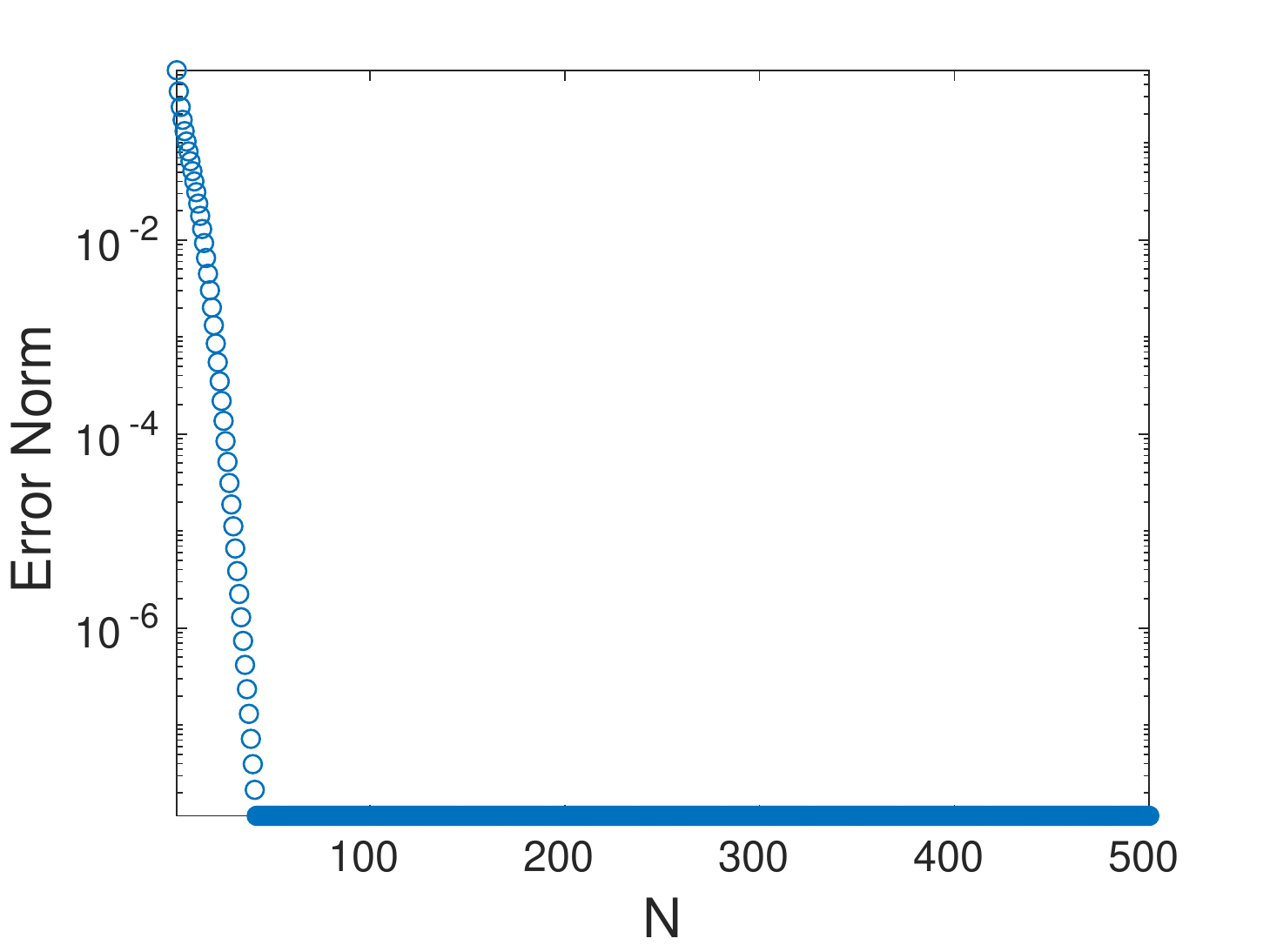}
    \includegraphics[width = 0.32 \textwidth]{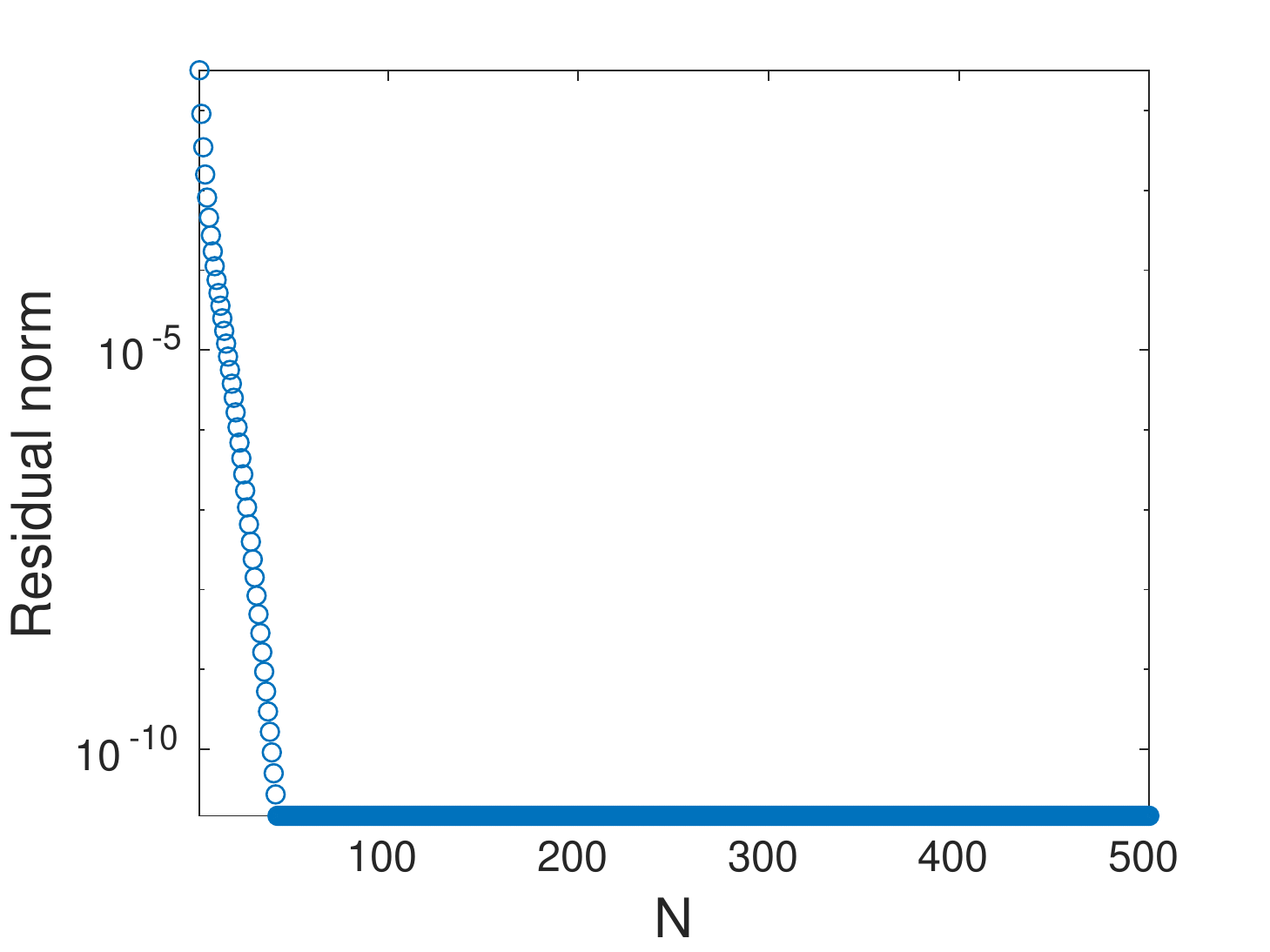}
    \includegraphics[width = 0.32 \textwidth]{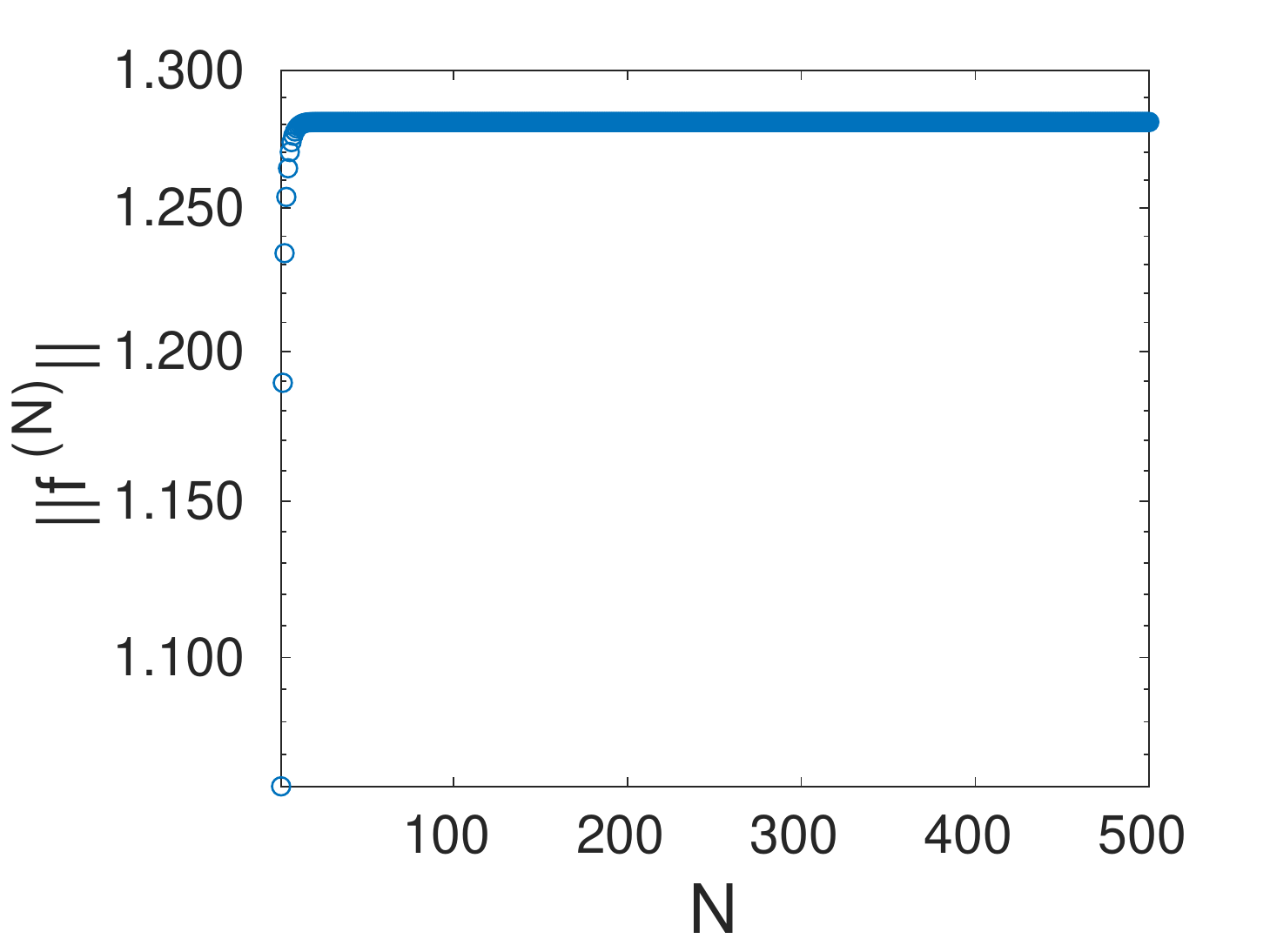}
    \caption{Case $M$}
  \end{subfigure}
  \begin{subfigure}[b]{\textwidth}
    \includegraphics[width = 0.32 \textwidth]{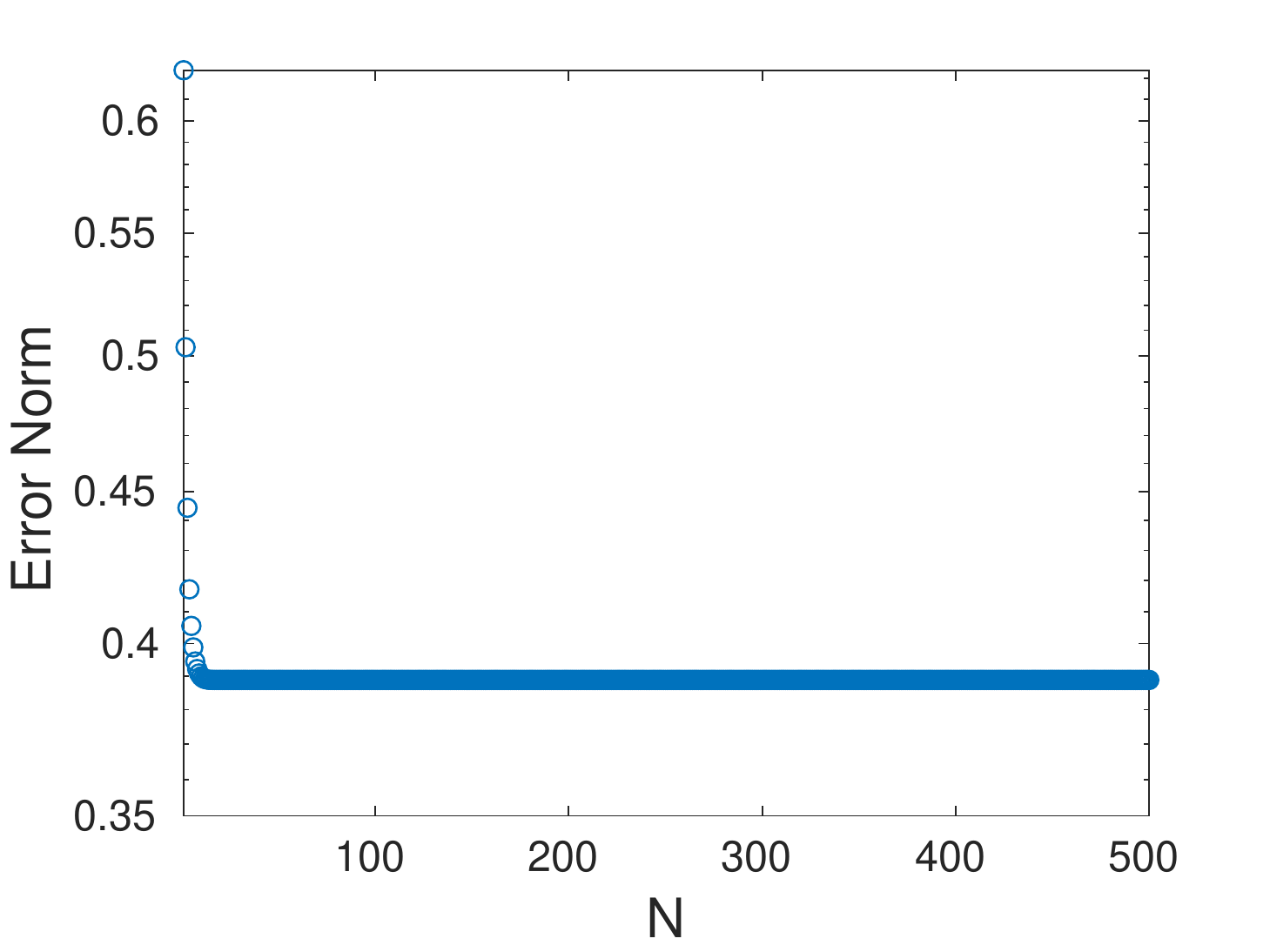}
    \includegraphics[width = 0.32 \textwidth]{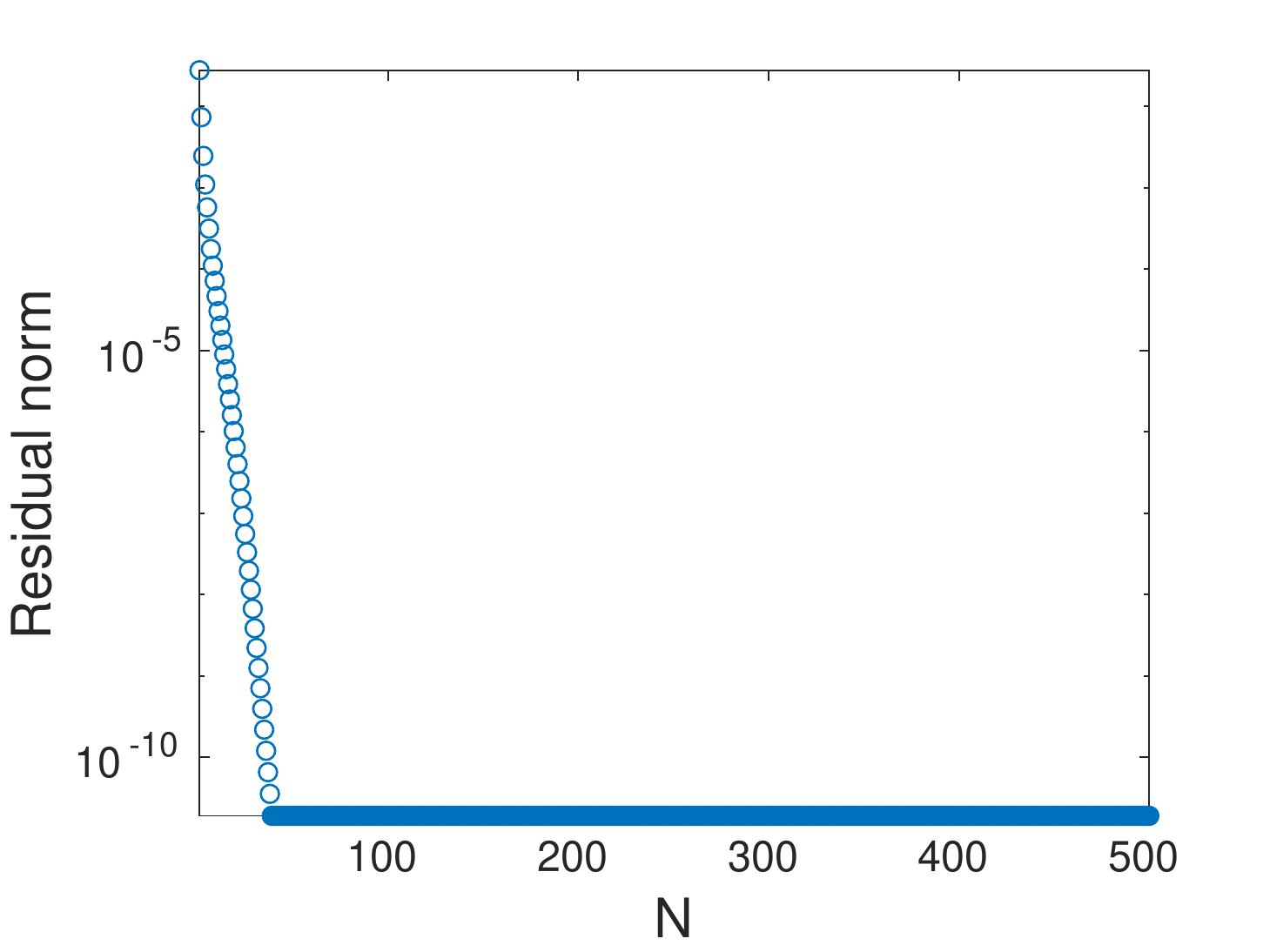}
    \includegraphics[width = 0.32 \textwidth]{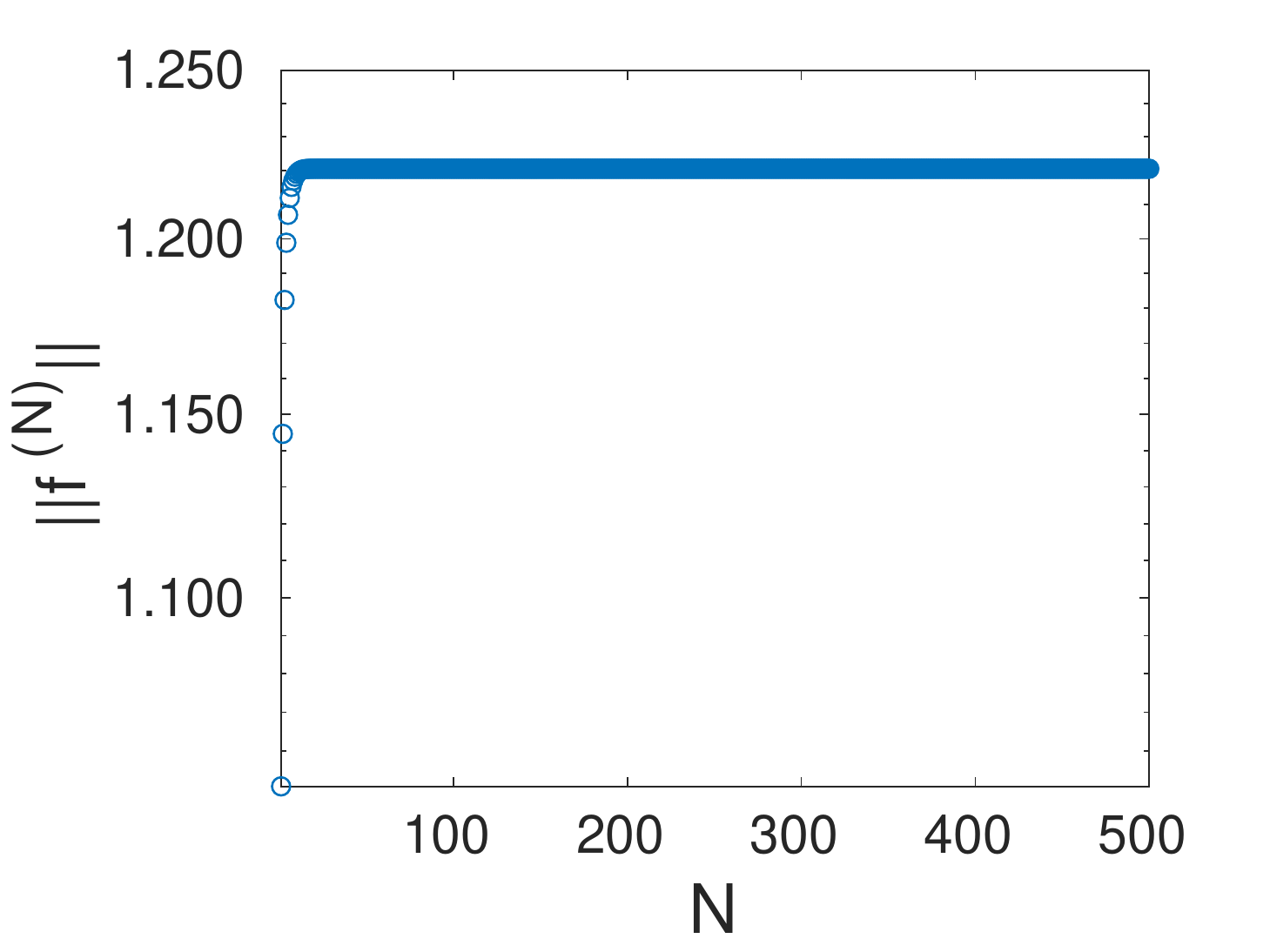}
    \caption{Case $\widetilde{M}$}
  \end{subfigure}
  \begin{subfigure}[b]{\textwidth}
    \includegraphics[width = 0.32 \textwidth]{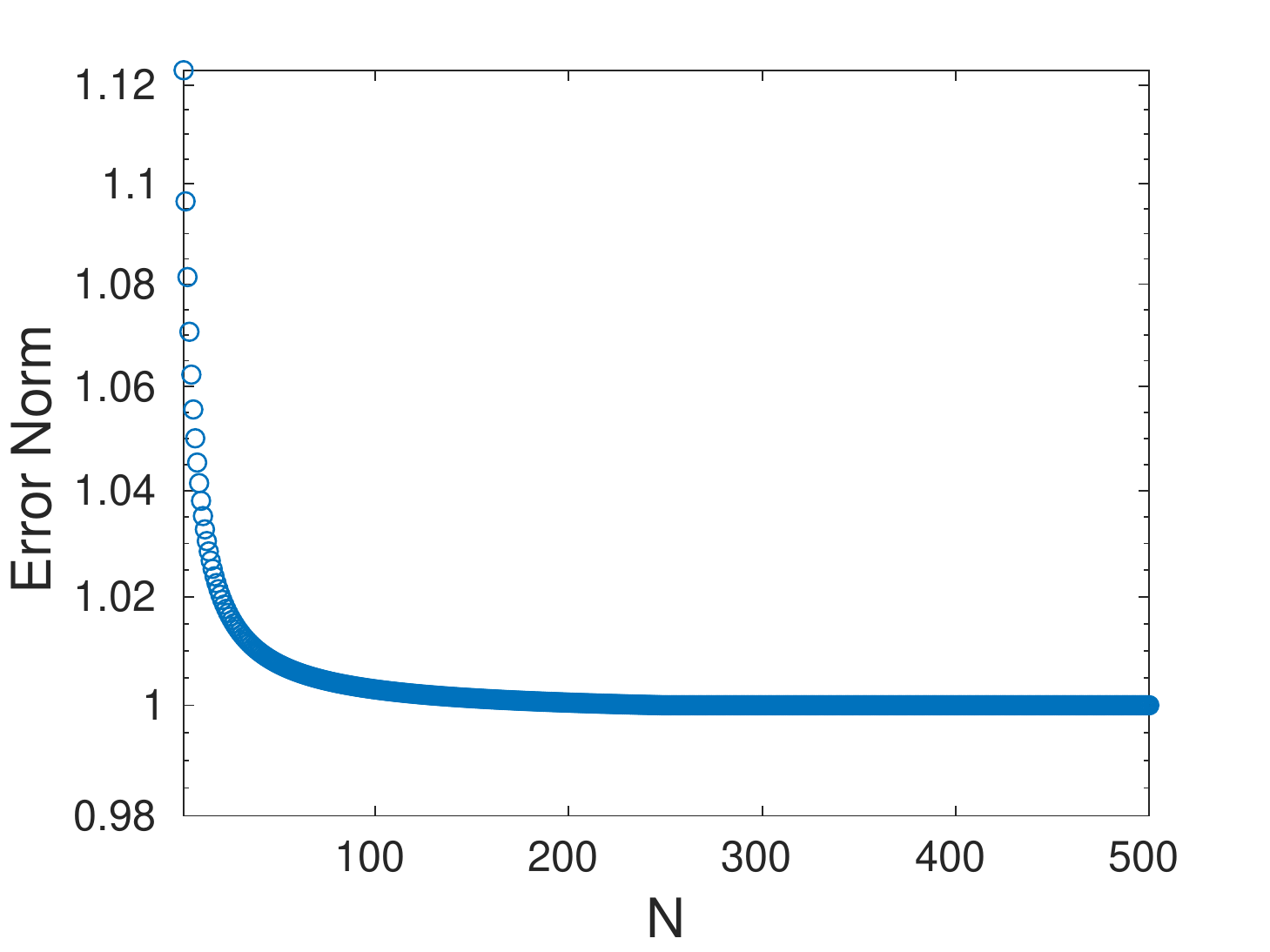}
    \includegraphics[width = 0.32 \textwidth]{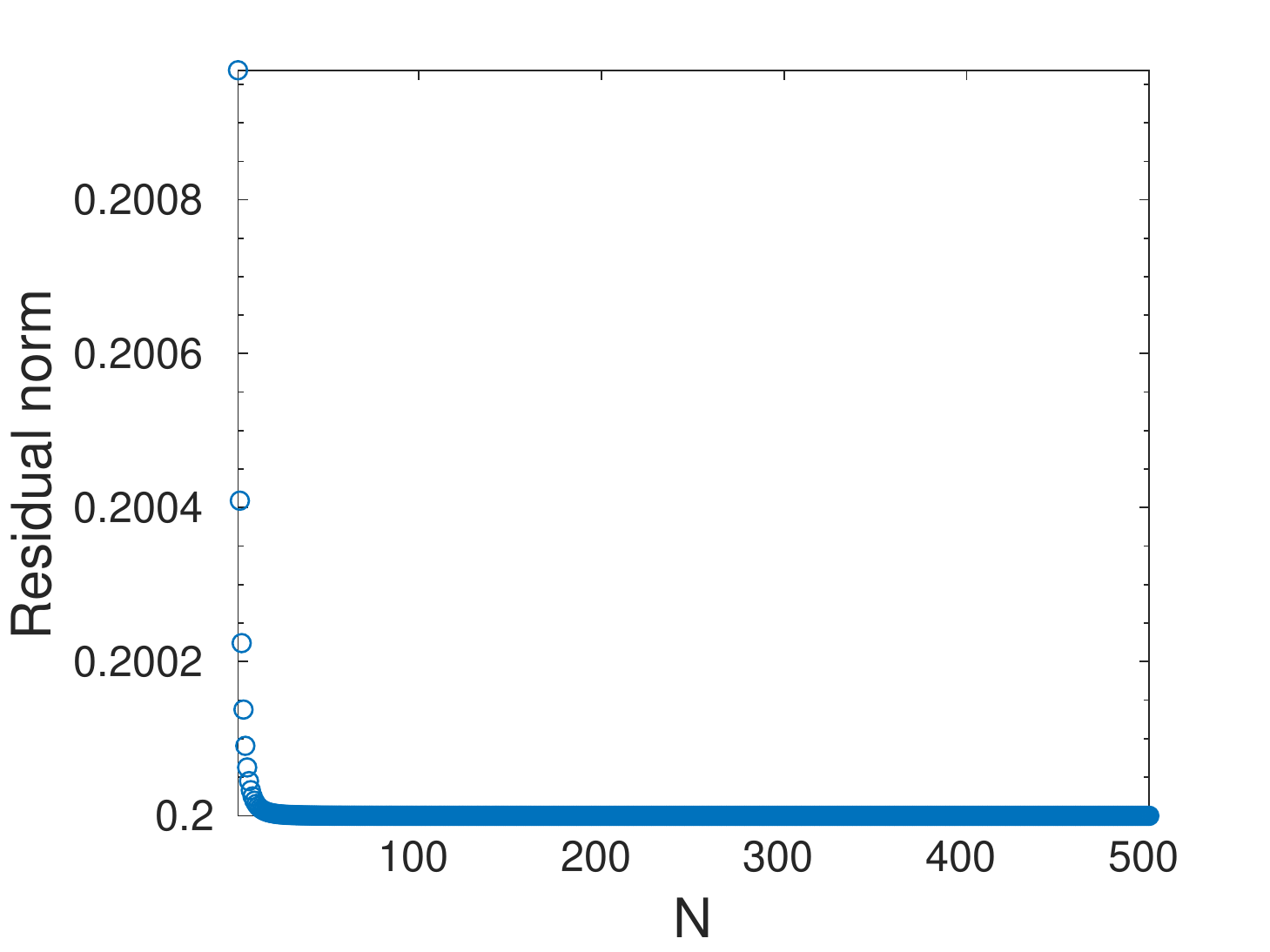}
    \includegraphics[width = 0.32 \textwidth]{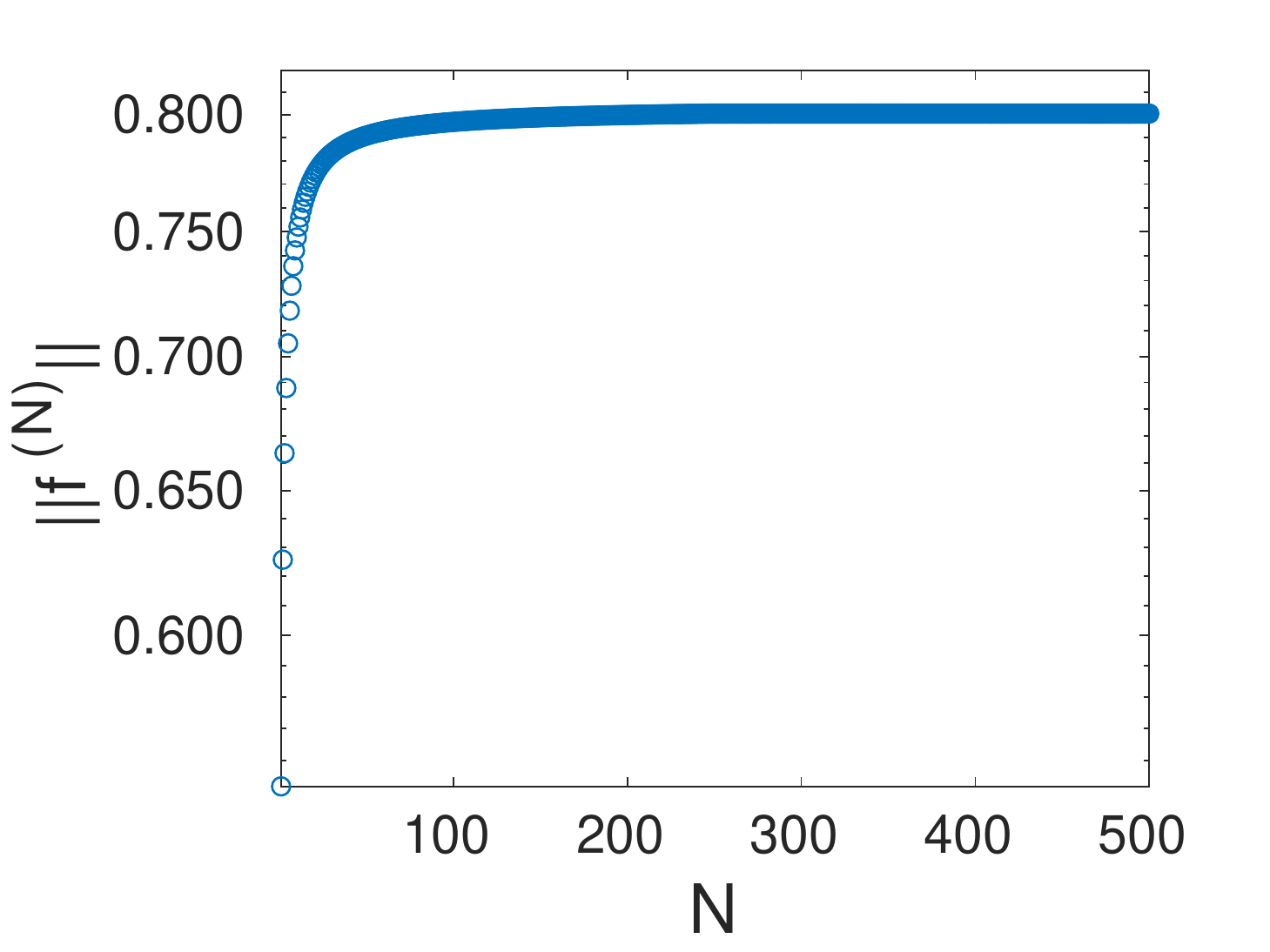}
    \caption{Case $\mathcal{R}$}
  \end{subfigure}
  \begin{subfigure}[b]{\textwidth}
    \includegraphics[width = 0.32 \textwidth]{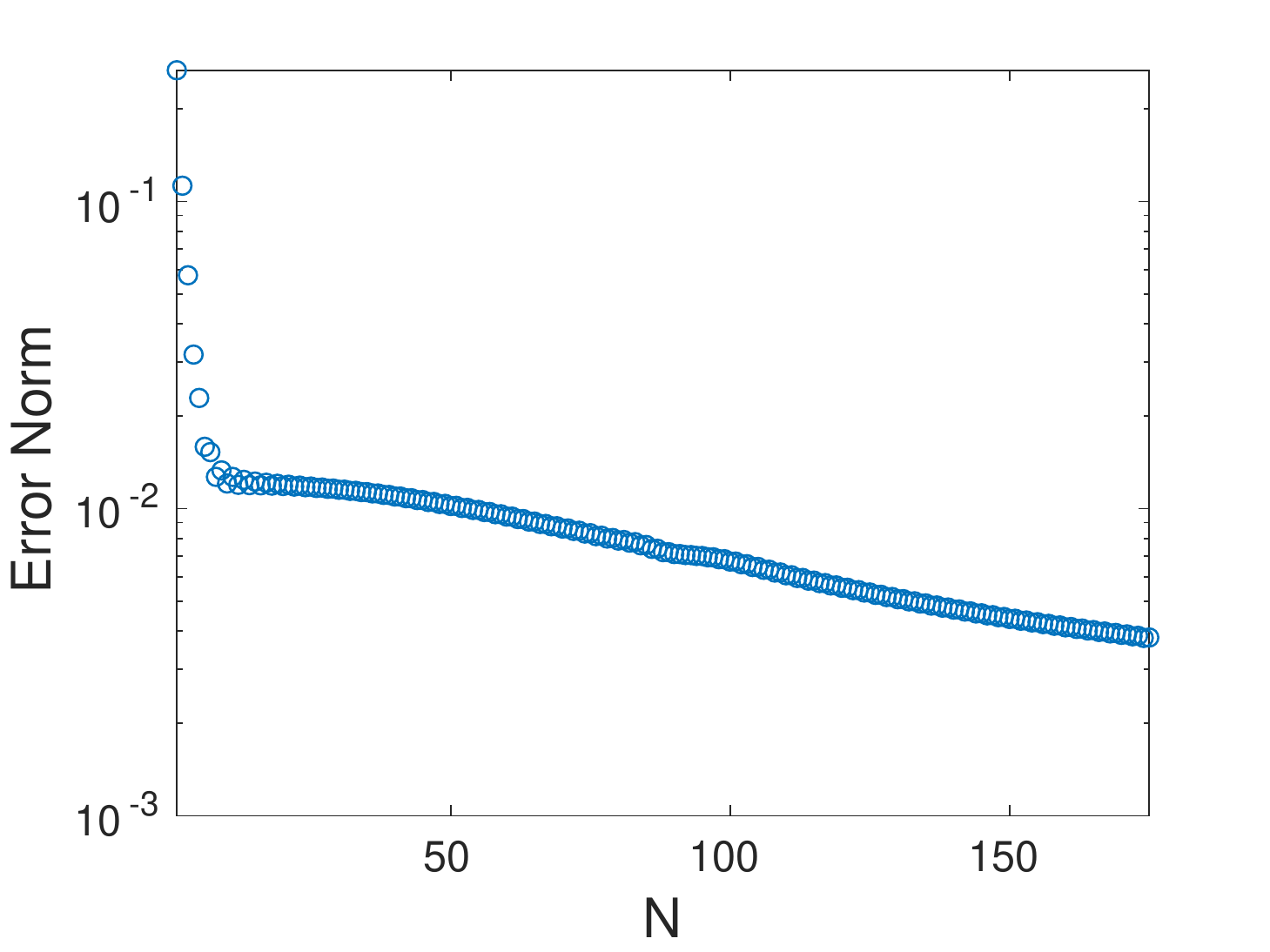}
    \includegraphics[width = 0.32 \textwidth]{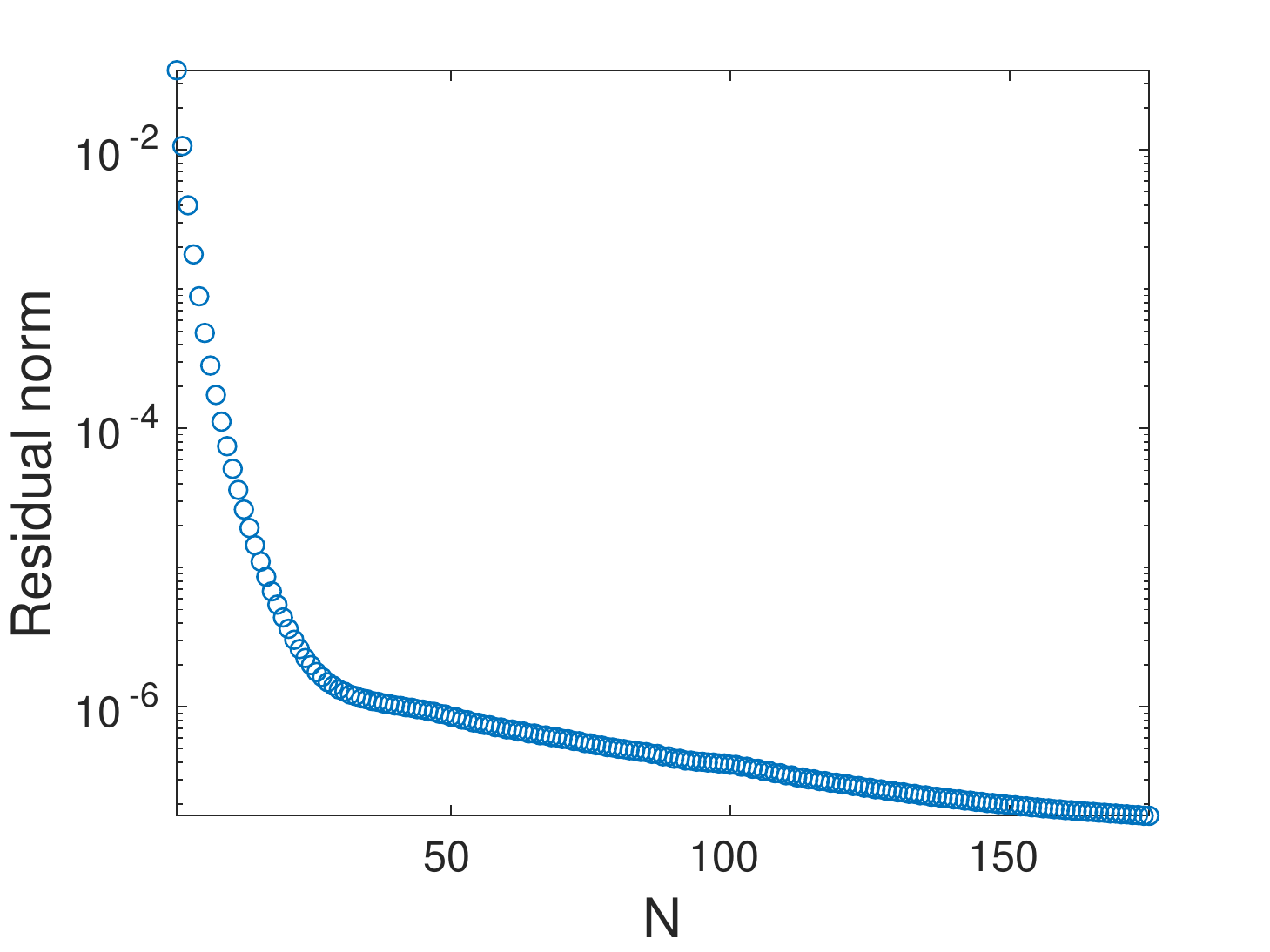}
    \includegraphics[width = 0.32 \textwidth]{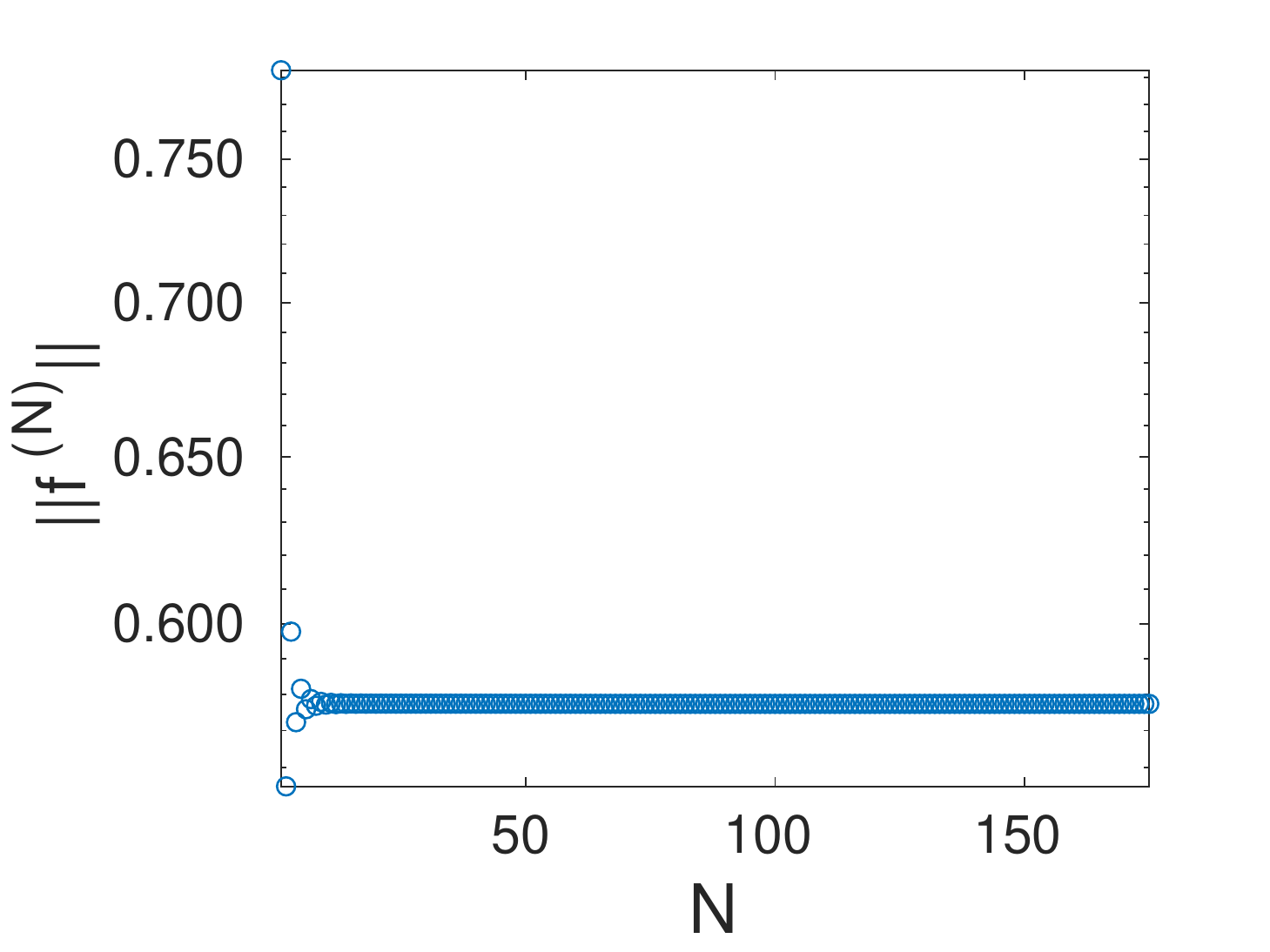}
    \caption{Case $V$}
  \end{subfigure}
  \caption{Error norm and residual norm as a function of iterations for the cases of the injective multiplication operator $M$ (baseline case), the weighted right shift $\mathcal{R}$, the non-injective multiplication operator $\widetilde{M}$, and the Volterra operator $V$.} \label{fig:error4cases}
\end{figure}

\begin{figure}[!t]
  \includegraphics[width = 5.6cm]{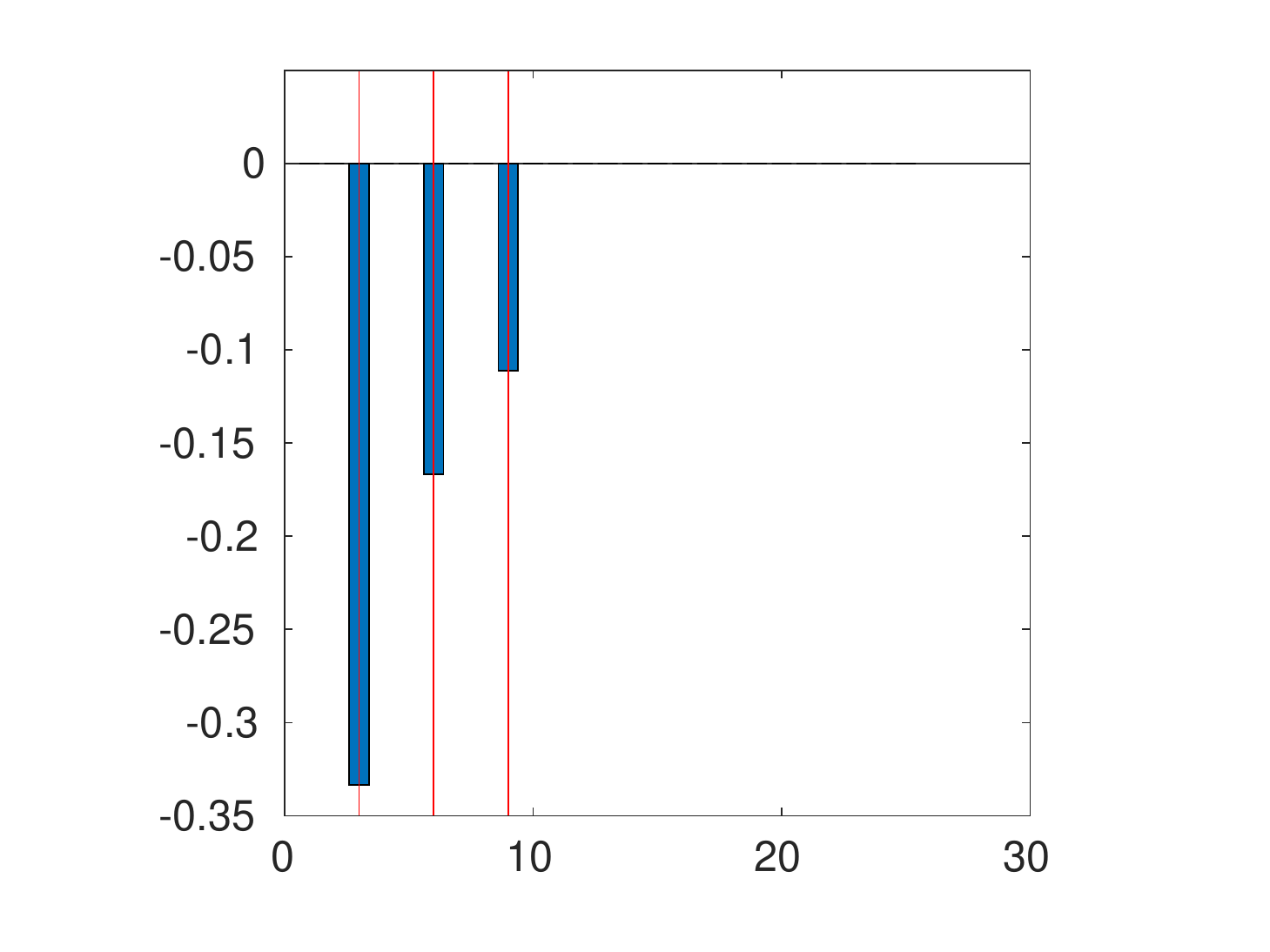}
  \caption{Support of the error vector (blue bars) for the non-injective problem $\widetilde{M}f=g$ at final iteration $N = 500$. The red lines mark the entry positions of the components of the kernel space of $\widetilde{M}$.} \label{fig:noninjkernel}
\end{figure}

The asymptotics $\|f-\widehat{f^{(N)}}\|_{\ell^2}\to 1.0$ and $\|g-\mathcal{R}\widehat{f^{(N)}}\|_{\ell^2}\to 0.2$ found numerically for the problem $\mathcal{R}f=g$ can be understood as follows. Since $\widehat{f^{(N)}}\in\mathcal{K}(\mathcal{R},g)$ and since the latter subspace only contains vectors with zero component along $e_1$, the error vector $\mathscr{E}_N=f-\widehat{f^{(N)}}$ tends to approach asymptotically the vector $e_1$ that gives the first component of $f=(1,\frac{1}{2},\frac{1}{3},\dots)$, and this explains $\|\mathscr{E}_N\|_{\ell^2}\to 1$.

Analogously, since by construction $g=(0,\frac{1}{5},\frac{1}{20},\frac{1}{45},\dots)$, and since the asymptotics on $\mathscr{E}_N$ implies that each component of $\widehat{f^{(N)}}$ \emph{but the first one} converges to the corresponding component of $f$, then $\widehat{f^{(N)}}\approx(0,\frac{1}{2},\frac{1}{3},\dots)$ for large $N$, whence also  $\mathcal{R}\widehat{f^{(N)}}\approx(0,0,\frac{1}{20},\frac{1}{45},\dots)$. Thus $g$ and $\mathcal{R}\widehat{f^{(N)}}$ tend to differ by only the vector $\frac{1}{5}e_2$, which explains $\| \mathfrak{R}_N \|_{\ell^2}\to\frac{1}{5}$.

In fact, the lack of norm vanishing of error and residual for the problem $\mathcal{R}f=g$ is far from meaning that the approximants $\widehat{f^{(N)}}$ carry no information about the exact solution $f$: in complete analogy to what we discussed in a more general context in \cite[Sect.~3 and Sect.~4]{CMN-truncation-2018} -- in particular in \cite[Theorems 3.2 and 4.1]{CMN-truncation-2018} -- $\widehat{f^{(N)}}$ reproduces $f$ \emph{component-wise} for all components but the first.

To summarise the above findings, the Krylov-solvable infinite-dimensional problems ($Mf=g$, $Vf=g$) display good (i.e., norm-) convergence of error and residual, which is sharper for the multiplication operator $M$ and quite slower for the Volterra operator $V$, indicating that the choice of the Krylov bases is not equally effective for the two problems. This is in contrast with the non-Krylov-solvable problem ($\mathcal{R}f=g$), which does not converge in norm at all. The uniformity in the size of the solutions produced by the GMRES algorithm appears not to be affected by the presence or the lack of Krylov-solvability.
%
%
% In particular, the uniform boundedness of $\Vert f^{(N)} \Vert_{\mathbb{C}^N}$ rather suggests that the error $\mathscr{E}_N$ for the problem $\mathcal{R}f=g$ vanishes \emph{weakly} as $N\to\infty$,
% %(Theorem \ref{prop:compact1}), 
% whereas it is numerically confirmed that it does \emph{not} vanish \emph{strongly}.

\subsection{Lack of injectivity}~

We then focussed on the behaviour of the truncated problems in the absence of injectivity, by means of the case study operator $\widetilde{M}$ defined by \eqref{eq:diagonal-noninjective}.

Let us observe that the inverse problem $\widetilde{M} f=g$, with $g\in\mathrm{ran}\widetilde{M}$, admits an \emph{infinity} of solutions, yet even in the lack of injectivity Corollary \ref{cor:self-adj_Kry} guarantees that such a problem admits a \emph{unique} Krylov solution.

Numerically we found the following.
\begin{itemize}
 \item As opposite to the baseline case $M$, the infinite-dimensional error norm  $\|\mathscr{E}_N\|_{\ell^2}=\|f-\widehat{f^{(N)}}\|_{\ell^2}$ does not vanish with the truncation size and remains instead uniformly bounded. The infinite-di\-men\-sional residual norm $\Vert \widetilde{M} \widehat{f^{(N)}} - g \Vert_{\ell^2(\mathbb{N})}$, instead, displays the same vanishing behaviour as for $M$
 (Fig.~\ref{fig:error4cases}).
 \item The norm of the approximated solution $\Vert \widehat{f^{(N)}} \Vert_{\ell^2(\mathbb{N}}$ remains uniformly bound\-ed (Fig.~\ref{fig:error4cases}).
\end{itemize}

The reason as to the observed lack of convergence of the error is unmasked in Figure~\ref{fig:noninjkernel}. There one can see that the only components in the error vector that are non-zero are the components corresponding to kernel vector entries.

This shows that the Krylov algorithm has indeed found a solution to the problem, modulo the kernel components in $f$.

\appendix

\section{Some prototypical example operators}

Let us review in this Appendix certain operators in Hilbert space that were useful in the course of our discussion, both as a source of examples or counter-examples, and as a playground to understand certain mechanisms typical of the infinite dimensionality.

\subsection{The multiplication operator on $\ell^2(\mathbb{N})$}\label{sec:multiplication_op}~

Let us denote with $(e_n)_{n\in\mathbbm{N}}$ the canonical orthonormal basis of $\ell^2(\mathbb{N})$. For a given bounded sequence $a\equiv(a_n)_{n\in\mathbbm{N}}$ in $\mathbb{C}$, the multiplication by $a$ is the operator $M^{(a)}:\ell^2(\mathbb{N})\to\ell^2(\mathbb{N})$ defined by $M^{(a)}e_n=a_n e_n$ $\forall n\in\mathbb{N}$ and then extended by linearity and density, in other words the operator given by the series
\begin{equation}
 M^{(a)}\;=\;\sum_{n=1}^\infty a_n|e_{n}\rangle\langle e_n|
\end{equation}
(that converges strongly in the operator sense).

$M^{(a)}$ is bounded with norm $\|M^{(a)}\|_{\mathrm{op}}=\sup_n|a_n|$ and spectrum $\sigma(M^{(a)})$ given by the closure in $\mathbb{C}$ of the set $\{a_1,a_2,a_3\dots\}$. Its adjoint is the multiplication by $a^*$. Thus, $M^{(a)}$ is normal. $M^{(a)}$ is self-adjoint whenever $a$ is real and it is compact if $\lim_{n\to\infty}a_n=0$.

\subsection{The right-shift operator on $\ell^2(\mathbb{N})$}\label{sec:Rshift}~

The operator $R:\ell^2(\mathbb{N})\to\ell^2(\mathbb{N})$ defined by $Re_n=e_{n+1}$ $\forall n\in\mathbb{N}$ and then extended by linearity and density, in other words the operator given by the series
\begin{equation}
 R\;=\;\sum_{n=1}^\infty |e_{n+1}\rangle\langle e_n|
\end{equation}
(that converges strongly in the operator sense), is called the right-shift operator.

$R$ is an isometry (i.e., it is norm-preserving) with closed range $\mathrm{ran} R=\{e_1\}^\perp$. In particular, it is bounded with $\|R\|_{\mathrm{op}}=1$, yet not compact, it is injective, and  invertible on its range, with bounded inverse
\begin{equation}
 R^{-1}:\mathrm{ran} R\to\cH\,,\qquad R^{-1}\;=\;\sum_{n=1}^\infty |e_n\rangle\langle e_{n+1}|\,.
\end{equation}

The adjoint of $R$ on $\cH$ is the so-called left-shift operator, namely the everywhere defined and bounded operator $L:\cH\to\cH$ defined by the (strongly convergent, in the operator sense) series
\begin{equation}
 L\;=\;\sum_{n=1}^\infty |e_n\rangle\langle e_{n+1}|\,,\qquad L=R^*\,.
\end{equation}
Thus, $L$ inverts $R$ on $\mathrm{ran} R$, i.e., $LR=\mathbbm{1}$, yet $RL=\mathbbm{1}-|e_1\rangle\langle e_1|$. One has $\ker R^*=\mathrm{span}\{e_1\}$.

$R$ and $L$ have the same spectrum $\sigma(R)=\sigma(L)=\{z\in\mathbb{C}\,|\,|z|\leqslant 1\}$, but $R$ has no eigenvalue, whereas the eigenvalue of $L$ form the open unit ball $\{z\in\mathbb{C}\,|\,|z|< 1\}$.

\subsection{The compact (weighted) right-shift operator on $\ell^2(\mathbb{N})$}\label{sec:compactRshift}~

This is the operator $\mathcal{R}:\ell^2(\mathbb{N})\to \ell^2(\mathbb{N})$ defined by the operator-norm convergent series
\begin{equation}\label{eq:compactRshift}
 \mathcal{R}\;=\;\sum_{n=1}^\infty\sigma_n|e_{n+1}\rangle\langle e_n|\,,
\end{equation}
where  $\sigma\equiv(\sigma_n)_{n\in\mathbbm{N}}$ is a given bounded sequence with $0<\sigma_{n+1}<\sigma_n$ $\forall n\in\mathbb{N}$ and $\lim_{n\to\infty}\sigma_n=0$. Thus, $\mathcal{R}e_n=\sigma_n e_{n+1}$.

$\mathcal{R}$ is injective and compact, and \eqref{eq:compactRshift} is its singular value decomposition, with norm $\|\mathcal{R}\|_{\mathrm{op}}=\sigma_1$, $\overline{\mathrm{ran}\,\mathcal{R}}=\{e_1\}^\perp$, and adjoint
\begin{equation}
 \mathcal{R}^*\;=\;\mathcal{L}\;=\;\sum_{n=1}^\infty\sigma_n|e_n\rangle\langle e_{n+1}|\,.
\end{equation}
Thus, $\mathcal{L}\mathcal{R}=M^{(\sigma^2)}$, the operator of multiplication by $(\sigma_n^2)_{n\in\mathbb{N}}$, whereas $\mathcal{R}\mathcal{L}=M^{(\sigma^2)}-\sigma_1^2|e_1\rangle\langle e_1|$.

\subsection{The compact (weighted) right-shift operator on $\ell^2(\mathbb{Z})$}\label{sec:compactRshift-Z}~

This is the operator $\mathcal{R}:\ell^2(\mathbb{Z})\to \ell^2(\mathbb{Z})$ defined by the operator-norm convergent series
\begin{equation}\label{eq:compactRshift-Z}
 \mathcal{R}\;=\;\sum_{n\in\mathbb{Z}}\sigma_{|n|}\,|e_{n+1}\rangle\langle e_n|\,,
\end{equation}
where  $\sigma\equiv(\sigma_n)_{n\in\mathbbm{N}_0}$ is a given bounded sequence with $0<\sigma_{n+1}<\sigma_n$ $\forall n\in\mathbb{N}_0$ and $\lim_{n\to\infty}\sigma_n=0$. Thus, $\mathcal{R}e_n=\sigma_{|n|} e_{n+1}$.

$\mathcal{R}$ is injective and compact, with $\mathrm{ran}\,\mathcal{R}$ dense in $\cH$ and norm $\|\mathcal{R}\|_{\mathrm{op}}=\sigma_0$.  \eqref{eq:compactRshift-Z} gives the singular value decomposition. The adjoint of $\mathcal{R}$ is
\begin{equation}
 \mathcal{R}^*\;=\;\mathcal{L}\;=\;\sum_{n\in\mathbb{Z}}\sigma_{|n|}\,|e_n\rangle\langle e_{n+1}|\,.
\end{equation}
Thus, $\mathcal{L}\mathcal{R}=M^{(\sigma^2)}=\mathcal{R}\mathcal{L}$.

The `inverse of $\mathcal{R}$ on its range' is the densely defined, surjective, unbounded operator $\mathcal{R}^{-1}:\mathrm{ran}\,\mathcal{R}\to \cH$ acting as
\begin{equation}
 \mathcal{R}^{-1}\;=\;\sum_{n\in\mathbb{Z}}\,\frac{1}{\sigma_{|n|}}\,|e_n\rangle\langle e_{n+1}|
\end{equation}
as a series that converges on $\mathrm{ran}\,\mathcal{R}$ in the strong operator sense.

\subsection{The Volterra operator on $L^2[0,1]$}\label{sec:Volterra}~

This is the operator $V:L^2[0,1]\to L^2[0,1]$ defined by
\begin{equation}\label{eq:defVolterra}
 (Vf)(x)\;=\;\int_0^x \!f(y)\,\ud y\,,\qquad x\in[0,1]\,.
\end{equation}

$V$ is compact and injective with spectrum $\sigma(V)=\{0\}$ (thus, the spectral point $0$ is not an eigenvalue) and norm $\|V\|_{\mathrm{op}}=\frac{2}{\pi}$. It's adjoint $V^*$ acts as
\begin{equation}
 (V^*f)(x)\;=\;\int_x^1 \!f(y)\,\ud y\,,\qquad x\in[0,1]\,,
\end{equation}
therefore $V+V^*$ is the rank-one orthogonal projection
\begin{equation}
 V+V^*\;=\;|\mathbf{1}\rangle\langle\mathbf{1}|
\end{equation}
onto the function $\mathbf{1}(x)=1$.

The singular value decomposition of $V$ is
\begin{equation}
 V\;=\;\sum_{n=0}^\infty\sigma_n|\psi_n\rangle\langle\varphi_n|\,,\qquad\quad 
 \begin{array}{rl}
  \sigma_n\;=&\!\frac{2}{(2n+1)\pi} \\
  \varphi_n(x)\;=&\!\sqrt{2}\,\cos\frac{(2n+1)\pi}{2}x \\
  \psi_n(x)\;=&\!\sqrt{2}\,\sin\frac{(2n+1)\pi}{2}x\,,
 \end{array}
\end{equation}
where both $(\varphi_n)_{n\in\mathbb{N}_0}$ and $(\psi_n)_{n\in\mathbb{N}_0}$ are orthonormal bases of $L^2[0,1]$.

Thus, $\mathrm{ran} V$ is dense, but strictly contained in $\cH$: for example, $\mathbf{1}\notin\mathrm{ran}V$. (Observe, though, that the dense subspace of the polynomials on $[0,1]$ is mapped by $V$ onto the non-dense $\mathrm{span}\{x,x^2,x^3,\dots\}$.) 

In fact, $V$ is invertible on its range, but does not have (everywhere defined) bounded inverse; yet $V-z\mathbbm{1}$ does, for any $z\in\mathbb{C}\setminus\{0\}$ (recall that $\sigma(V)=\{0\}$), and
\begin{equation}
 (z\mathbbm{1}-V)^{-1}\psi\;=\;z^{-1}\psi+z^{-2}\!\int_0^x e^{\frac{x-y}{z}}\,\psi(y)\,\ud y\qquad\forall\psi\in\cH\,,\; z\in\mathbb{C}\setminus\{0\}\,.
\end{equation}

The explicit action of the powers of $V$ is
\begin{equation}\label{eq:powersVolterra}
 (V^n f)(x)\;=\;\frac{1}{\,(n-1)!}\int_0^x(x-y)^{n-1}f(y)\,\ud y\,,\qquad n\in\mathbb{N}\,.
\end{equation}

\subsection{The multiplication operator over $\Omega\subset\mathbb{C}$ in $L^2(\Omega)$}\label{sec:mult_annulus}~

This is the operator $M:L^2(\Omega)\to L^2(\Omega)$, $f\mapsto zf$, where $\Omega$ is a bounded open region in $\mathbb{C}$. 
$M_z$ is a normal bounded bijection with norm $\|M_z\|_{\mathrm{op}}=\sup_{z\in\Omega}|z|$, spectrum $\sigma(M_z)=\overline{\Omega}$, and adjoint given by $M_z^*f=\overline{z}f$.


\begin{thebibliography}{10}

\bibitem{Abramowitz-Stegun-1964}
{\sc M.~Abramowitz and I.~A. Stegun}, {\em {Handbook of mathematical functions
  with formulas, graphs, and mathematical tables}}, vol.~55 of {National Bureau
  of Standards Applied Mathematics Series}, For sale by the Superintendent of
  Documents, U.S. Government Printing Office, Washington, D.C., 1964.

\bibitem{Brown-Walker-1997}
{\sc P.~N. Brown and H.~F. Walker}, {\em {G{MRES} on (nearly) singular
  systems}}, SIAM J. Matrix Anal. Appl., 18 (1997), pp.~37--51.

\bibitem{caruso-thesis-2019}
{\sc N.~A. Caruso}, {\em {On Krylov methods in infinite-dimensional Hilbert
  space}}, Ph.D.~thesis, SISSA Trieste (2019).

\bibitem{CM-Nemi-unbdd-2019}
{\sc N.~A. Caruso and A.~Michelangeli}, {\em {Convergence of the conjugate
  gradient method with unbounded operators}}, SISSA preprint 20/2019/MATE
  (2019).

\bibitem{CMN-truncation-2018}
{\sc N.~A. Caruso, A.~Michelangeli, and P.~Novati}, {\em {Truncation and
  convergence issues for bounded linear inverse problems in Hilbert space}},
  preprint (2018).

\bibitem{Cipra-SIAM-News}
{\sc B.~A. Cipra}, {\em {The best of the 20th century: Editors name top 10
  algorithms}}, SIAM News, 33 (2005).

\bibitem{Daniel-1967}
{\sc J.~W. Daniel}, {\em {The conjugate gradient method for linear and
  nonlinear operator equations}}, SIAM J. Numer. Anal., 4 (1967), pp.~10--26.

\bibitem{Dongarra-Sullivan-Best10-2000}
{\sc J.~Dongarra and F.~Sullivan}, {\em {The Top 10 Algorithms (Guest editors'
  intruduction)}}, Comput. Sci. Eng., 2 (2000), pp.~22--23.

\bibitem{Engl-Hanke-Neubauer-1996}
{\sc H.~W. Engl, M.~Hanke, and A.~Neubauer}, {\em {Regularization of inverse
  problems}}, vol.~375 of {Mathematics and its Applications}, Kluwer Academic
  Publishers Group, Dordrecht, 1996.

\bibitem{Ern-Guermond_book_FiniteElements}
{\sc A.~Ern and J.-L. Guermond}, {\em {Theory and practice of finite
  elements}}, vol.~159 of {Applied Mathematical Sciences}, Springer-Verlag, New
  York, 2004.

\bibitem{Freund-Hochbruck-1994}
{\sc R.~W. Freund and M.~Hochbruck}, {\em {On the use of two {QMR} algorithms
  for solving singular systems and applications in {M}arkov chain modeling}},
  Numer. Linear Algebra Appl., 1 (1994), pp.~403--420.

\bibitem{Gasparo-Papini-Pasquali-2008}
{\sc M.~G. Gasparo, A.~Papini, and A.~Pasquali}, {\em {Some properties of
  {GMRES} in {H}ilbert spaces}}, Numer. Funct. Anal. Optim., 29 (2008),
  pp.~1276--1285.

\bibitem{Gazzola-Hansen-Nagy-2018}
{\sc S.~Gazzola, P.~C. Hansen, and J.~G. Nagy}, {\em {IR Tools: a MATLAB
  package of iterative regularization methods and large-scale test problems}},
  Numerical Algorithms,  (2018).

\bibitem{Geher-1972}
{\sc L.~Geh{\'e}r}, {\em {Cyclic vectors of a cyclic operator span the space}},
  Proc. Amer. Math. Soc., 33 (1972), pp.~109--110.

\bibitem{Halmos-HilbertSpaceBook}
{\sc P.~R. Halmos}, {\em {A {H}ilbert space problem book}}, vol.~19 of
  {Graduate Texts in Mathematics}, Springer-Verlag, New York-Berlin,
  second~ed., 1982.
\newblock Encyclopedia of Mathematics and its Applications, 17.

\bibitem{Hanke-ConjGrad-1995}
{\sc M.~Hanke}, {\em {Conjugate gradient type methods for ill-posed problems}},
  vol.~327 of {Pitman Research Notes in Mathematics Series}, Longman Scientific
  \& Technical, Harlow, 1995.

\bibitem{Hansen-RegToolbox_v4-1}
{\sc P.~C. Hansen}, {\em {Regularization Tools -- A Matlab Package for Analysis
  and Solution of Discrete Ill-Posed Problems}}, {\tt
  http://www.imm.dtu.dk/$\sim$pcha/Regutools/}.

\bibitem{Herzog-Ekkehard-2015}
{\sc R.~Herzog and E.~Sachs}, {\em {Superlinear convergence of {K}rylov
  subspace methods for self-adjoint problems in {H}ilbert space}}, SIAM J.
  Numer. Anal., 53 (2015), pp.~1304--1324.

\bibitem{Kammerer-Nashed-1972}
{\sc W.~J. Kammerer and M.~Z. Nashed}, {\em {On the convergence of the
  conjugate gradient method for singular linear operator equations}}, SIAM J.
  Numer. Anal., 9 (1972), pp.~165--181.

\bibitem{Karush-1952}
{\sc W.~Karush}, {\em {Convergence of a method of solving linear problems}},
  Proc. Amer. Math. Soc., 3 (1952), pp.~839--851.

\bibitem{Liesen-Strakos-2003}
{\sc J.~Liesen and Z.~e. {Strako\v s}}, {\em {Krylov subspace methods}},
  {Numerical Mathematics and Scientific Computation}, Oxford University Press,
  Oxford, 2013.
\newblock Principles and analysis.

\bibitem{Nemirovskiy-Polyak-1985}
{\sc A.~S. Nemirovskiy and B.~T. Polyak}, {\em {Iterative methods for solving
  linear ill-posed problems under precise information. {I}}}, Izv. Akad. Nauk
  SSSR Tekhn. Kibernet.,  (1984), pp.~13--25, 203.

\bibitem{Nevanlinna-1993-converg-iterat-book}
{\sc O.~Nevanlinna}, {\em {Convergence of iterations for linear equations}},
  {Lectures in Mathematics ETH Z{\"u}rich}, Birkh{\"a}user Verlag, Basel, 1993.

\bibitem{Novati-2018-KryTikh}
{\sc P.~Novati}, {\em {A convergence result for some {K}rylov-{T}ikhonov
  methods in {H}ilbert spaces}}, Numer. Funct. Anal. Optim., 39 (2018),
  pp.~655--666.

\bibitem{Quarteroni-book_NumModelsDiffProb}
{\sc A.~Quarteroni}, {\em {Numerical models for differential problems}},
  vol.~16 of {MS\&A. Modeling, Simulation and Applications}, Springer, Cham,
  2017.
\newblock Third edition.

\bibitem{Riesz-Nagy_FA-1955_Eng}
{\sc F.~Riesz and B.~Sz.-Nagy}, {\em {Functional analysis}}, Frederick Ungar
  Publishing Co., New York, 1955.
\newblock Translated by Leo F. Boron.

\bibitem{Rudin-realcomplexanalysis}
{\sc W.~Rudin}, {\em {Real and complex analysis}}, McGraw-Hill Book Co., New
  York, third~ed., 1987.

\bibitem{Saad-1981}
{\sc Y.~Saad}, {\em {Krylov subspace methods for solving large unsymmetric
  linear systems}}, Math. Comp., 37 (1981), pp.~105--126.

\bibitem{Saad-2003_IterativeMethods}
{\sc Y.~Saad}, {\em {Iterative methods for sparse linear systems}}, Society for
  Industrial and Applied Mathematics, Philadelphia, PA, second~ed., 2003.

\bibitem{schmu_unbdd_sa}
{\sc K.~Schm{\"u}dgen}, {\em {Unbounded self-adjoint operators on {H}ilbert
  space}}, vol.~265 of {Graduate Texts in Mathematics}, Springer, Dordrecht,
  2012.

\bibitem{Winther-1980}
{\sc R.~Winther}, {\em {Some superlinear convergence results for the conjugate
  gradient method}}, SIAM J. Numer. Anal., 17 (1980), pp.~14--17.

\end{thebibliography}
\end{document}